\newtheorem{theorem}{Theorem}
\newtheorem{lemma}{Lemma}
\newtheorem{proposition}{Proposition}
\newtheorem{definition}{Definition}
	\newtheorem{corollary}{Corollary}
\newtheorem{claim}{Claim}
 \theoremstyle{definition}
 \theoremstyle{remark}
 \numberwithin{equation}{section}
\newcommand{\vertiii}[1]{{\left\vert\kern-0.25ex\left\vert\kern-0.25ex\left\vert #1
    \right\vert\kern-0.25ex\right\vert\kern-0.25ex\right\vert}}
\newcommand{\C}{{\mathbb C}}
\renewcommand{\[}{\left[}
\newcommand{\f}[2]{\frac{#1}{#2}}
\newcommand{\cl}{{\mathcal L}}
\newcommand{\al}{\alpha}
\newcommand{\be}{\beta}
\newcommand{\ga}{\gamma}
\newcommand{\ka}{\kappa}
\newcommand{\la}{\lambda}
\newcommand{\La}{\Lambda}
\newcommand{\si}{\sigma}
\newcommand{\vp}{\varphi}
\newcommand{\om}{\omega}
\newcommand{\rone}{\mathbb R}
\newcommand{\rthree}{\mathbf R^3}
\newcommand{\dpr}[2]{\langle #1,#2 \rangle}
\newcommand{\eps}{\epsilon}
\newcommand{\ch}{\mathcal H}
\newcommand{\p}{\partial}
\newcommand{\beq}{\begin{equation}}
\newcommand{\eeq}{\end{equation}}
\newcommand{\beqna}{\begin{eqnarray*}}
\newcommand{\eeqna}{\end{eqnarray*}}
\newcommand{\beqn}{\begin{equation*}}
\newcommand{\eeqn}{\end{equation*}}
\newcommand{\bp}{\begin{proof}}
\newcommand{\ep}{\end{proof}}
\newcommand{\bprop}{\begin{proposition}}
\newcommand{\eprop}{\end{proposition}}
\newcommand{\bt}{\begin{theorem}}
\newcommand{\et}{\end{theorem}}
\newcommand{\bex}{\begin{Example}}
\newcommand{\eex}{\end{Example}}
\newcommand{\bc}{\begin{corollary}}
\newcommand{\ec}{\end{corollary}}
\newcommand{\bcl}{\begin{claim}}
\newcommand{\ecl}{\end{claim}}
\newcommand{\bl}{\begin{lemma}}
\newcommand{\el}{\end{lemma}}
\newcommand{\cj}{{\mathcal J}}
\newcommand{\ci}{{\mathcal I}}
\begin{document}

\title
[Periodic waves of the Benney system]{On the stability of the periodic waves  for the  Benney system}

 \author{Sevdzhan Hakkaev}
 \author{Milena Stanislavova}
 \author{Atanas Stefanov}

 \address{Sevdzhan Hakkaev,
 Department of Mathematics, Faculty of Science, 	Trakya University,  Edirne, Turkey and  Institute of Mathematics and  Informatics, Bulgarian Academy of Sciences, 1113 Sofia, Bulgaria}
 \email{s.hakkaev@shu.bg}

 \address{Milena Stanislavova,
 	Department of Mathematics,
 	University of Alabama - Birmingham,
 	University Hall, Room 4005,
 	1402 10th Avenue South
 	Birmingham AL 35294-1241, USA  } \email{mstanisl@uab.edu}
 \address{Atanas Stefanov, 	Department of Mathematics,
 	University of Alabama - Birmingham,
 	University Hall, Room 4005,
 	1402 10th Avenue South
 	Birmingham AL 35294-1241, USA }

 \email{stefanov@uab.edu}

 \thanks{
 	Milena Stanislavova is partially supported by NSF-DMS,    \# 2108285.
 	Atanas Stefanov acknowledges  partial support  from NSF-DMS,   \# 1908626.}

\subjclass[2000]{35B35, 35B40, 35G30}

\keywords{ spectral  stability, periodic  waves,  Benney system}

\date{\today}

\begin{abstract}
  We analyze the Benney model  for  interaction of short and long waves in   resonant water wave interactions. Our particular interest is in the periodic traveling waves, which we construct and study in detail. The main results are that, for all natural values of the  parameters, the periodic dnoidal waves  are spectrally stable with respect to perturbations of the same period.  For another natural  set of parameters, we construct the snoidal waves, which exhibit instabilities, in the same setup.

  Our results are the first instability results in this context. On the other hand, the spectral stability established herein  improves significantly  upon the work \cite{AVH}, which established stability of the dnoidal waves, on a subset of parameter space, by  relying  on the Grillakis-Shatah theory.  Our approach, which turns out to give definite answer for the entire domain of parameters,  relies on the instability index theory, as developed by \cite{KKS1, KKS2, LZ, Pel}.   Interestingly, end even though the linearized operators are explicit, our spectral analysis requires subtle and detailed analysis of matrix Schr\"odinger operators in the periodic context, which support some interesting features.
\end{abstract}

\maketitle

  \section{Introduction}
  We consider the following  system of PDE
  \begin{equation}\label{1.1}
  \left\{ \begin{array}{ll}
  iu_{t}+u_{xx}=uv+\beta |u|^2u, \ \ -T\leq x\leq T, t\in\rone_+\\
  v_{t}=(|u|^2)_{x},
  \end{array} \right.
  \end{equation}
  where $\beta$ is a real parameter, $u$ is complex valued function, and $v$ is real-valued function. This system is introduced by Benney, \cite{B1, B2} which models the interaction of short and long waves in resonant water waves interaction in a nonlinear medium.

  The Cauchy problem on the whole line case for the system (\ref{1.1}) was considered in \cite{BOP, Co}. The existence and nonlinear stability of solitary waves was studied in \cite{GC, L}.

  We consider such model on a periodic background, that is, we impose a periodic boundary conditions. The Cauchy problem for  \eqref{1.1}  has been previously
  considered in this context, \cite{AVH}. Let us  pause for the moment and review the said paper, as it serves as a starting point for our investigation.  More precisely, in \cite{AVH}, the authors have established, via the Fourier restriction method,  that the problem is locally well-posed for data $(u_0, v_0)\in H^r[-T,T] \times H^s[-T,T]$, whenever
  $\max(0, r-1)\leq s\leq \min(r, 2r-1)$. In particular, Hadamard
  well-posedness holds in the spaces $H^{\f{1}{2}}([-T,T])\times L^2[-T,T]$ and also in the smaller space  $H^{1}([-T,T])\times L^2[-T,T]$. Interestingly, ill-posedness results (in the sense of non-uniformly continuous dependence on initial conditions)  were also obtained in $H^r\times H^s$, whenever $r<0$.

    Here we consider the spectral stability of periodic traveling waves of dnoidal and snoidal type. We are interested in the stability of periodic traveling wave
solutions of (\ref{1.1}) with respect to perturbations that are periodic
of the same period as the corresponding wave solutions.

 We provide the relevant definitions of the various notion of stability below, but we would like to discuss the advances made in the last forty years in the area of stability of periodic traveling waves. Benjamin, in the seminal work, \cite{Ben}, first considered the stability of the cnoidal solution as a periodic traveling wave of KdV. His results were later clarified and streamlined in \cite{ABS}, where the authors have made use of the Grillakis-Shatah-Strauss formalism. It is worth mentioning the work \cite{Ang1}, where the author has addressed, in a similar manner, periodic waves for mKdV and NLS. In the important works \cite{DK1, DK2},  the authors have considered the stability of more general families of solutions arising in the generalized KdV models.
 
 More recently, in the works \cite{NA1, NA2}, Angulo and Natali have developed a novel approach for studying periodic traveling waves for a general class of dispersive models, which extracts the necessary spectral information, based on the so-called positivity theory for the multipliers.  For other models such as Klein-Gordon-Schrodinger system, Schr\"odinger-Boussinesq system and Schrodinger system stability of periodic waves is obtained in \cite{NP1, NP2, NP3, PA, FP}. In the context of standing waves,  interesting contributions were made by Gallay and Haragus, \cite{GH1} and \cite{GH2}. While the results in \cite{GH1} concern periodic waves in the context of NLS on the line, the results in  \cite{GH2} are more relevant to our discussion herein. Namely,  rigorous stability analysis was developed to deal with quasi-periodic waves in the cubic NLS context, both in the focussing and defocussing scenarios. All of these works, rely, in one degree or another on the Grillakis-Shatah-Strauss approach, which establishes orbital stability based on conservation laws. This almost universally requires a $C^1$ dependence on the wave speed parameters, which is not always easy to establish, so an ad hoc assumption in that regard is  usually made.

 As it turns out, one may study an almost equivalent stability property, namely the spectral/linear stability, see Definition \ref{defi:10} below\footnote{In fact, under some generic  conditions  on the waves, one may convert such spectral stability statements into orbital stability results, see Theorem 5.2.11, \cite{Kap}}. This is a fast developing theory, which has seen some spectacular advances in recent years, \cite{KKS1, KKS2, LZ, Pel}. This approach, has several   advantages over the classical GSS approach. For example, one can study the spectral stability as a purely linear problem, without paying particular attention to the actual conservation laws, see \eqref{124} below. A second major advantage is that, when it comes to systems of coupled PDE's, it is just technically hard to deal with the conservation laws directly, as the linearized operators become non-diagonal matrix operators, which are harder to analyze.

 The stability of waves, especially in the context of systems of coupled PDE, especially in the spatially periodic context,  is a challenging topic and an active area of research.   We should point out that great progress was made in the last fifteen years regarding dispersive equations for scalar quantities  - in that regard, we would like to mention the works \cite{BJ1}, \cite{BJK1} for KdV type models, while \cite{BJK2} established an index counting formula for abstract second order in time models.  Concerning systems of dispersive PDE,
 there are just a few results available in the literature about periodic waves. In fact, we are aware of just a few  rigorous works on the subject - \cite{HSS} deals with stability of periodic waves in systems by the index counting method, while \cite{HSS1} and \cite{FP} apply the more standard GSS formalism to the corresponding problem at hand. One explanation  for the relative scarcity of rigorous analytical results in this context  are the difficulties associated with the  spectral analysis of the linearized operators  in cases of systems.

Regarding the Benney system, which is the system of interest in this article,   it was already considered in \cite{AVH}. More specifically, the authors were able to construct a family  of smooth periodic traveling waves of dnoidal type and show their  orbital stability. This was done under certain conditions on $\be$ and by relying on the Grillakis-Shatah-Strauss approach. More specifically, they   rely  on  the following conservation laws for the Benney system,
 $$\begin{array}{ll}
 M(u)=\int_{-T}^{T} {|u(t,x)|^2}dx\\
 \\
 E(u,v)=\int_{-T}^{T} \left[v(t,x)|u(t,x)|^2+|u_x(t,x)|^2+\frac{\beta}{2}|u(t,x)|^4 \right]dx\\
 \\
 P(u,v)=\int_{-T}^{T} {\left[ |v(t,x)|^2+2\Im (u(t,x)\overline{u}_x(t,x))\right] }dx.
 \end{array}
 $$
 In order to explain
our spectral  stability results in detail, we need to linearize the system (\ref{1.1})  about
the periodic traveling wave solutions. Then we need to obtain the required
spectral information about the operator of linearization and investigate the
index of stability $k_{Ham}$, as introduced  in \cite{LZ}.

The paper is organized as follows. First, we construct the periodic traveling waves of dnoidal and snoidal type and set-up the linearized problem for system (\ref{1.1}). In Section 2, we overview the index
stability theory and investigate spectral properties of the operator of the linearization. In Section 3, using the index counting theory we analyze the stability of periodic traveling waves.

  \subsection{Periodic traveling waves}
   In this section, we construct periodic waves of the form
   $$
   u(t,x)=
   e^{i\omega t}e^{i\frac{c}{2}(x-ct)}\varphi(x-ct), \ \
   v(t,x)=\psi(x-ct),
   $$
   for the Benney
   system (\ref{1.1}). Plugging in (\ref{1.1}), we get the following system
   \begin{equation}\label{2.1}
   \left\{ \begin{array}{ll}
   \varphi''-\left( \om-\frac{c^2}{4}\right)\varphi=\varphi \psi+\beta\varphi^3 \\
   -c\psi'=2\varphi \varphi'
   \end{array} \right.
   \end{equation}
   The case $c=0$ leads to semi-trivial constant solutions $\vp$, so we do not consider it herein. Henceforth, we assume $c\neq 0$.
 Integrating second equation in (\ref{2.1}),
 we get $\psi=-\frac{1}{c}\varphi^2+\ga$, where $\ga$ is a constant of integration. Substituting $\psi$ in the first equation of (\ref{2.1}), we get the following equation for $\varphi$
 \begin{equation}\label{2.2}
 \varphi''-\sigma\varphi=\left( \beta -\frac{1}{c}\right) \varphi^3,
 \end{equation}
  where  we have introduced the important parameter $\sigma =\om -\frac{c^2}{4}+\ga$.  Integrating, we get \begin{equation}\label{2.3}
  \varphi'^2=\frac{1}{2}\left(\beta -\frac{1}{c}\right)\varphi^4+\sigma \varphi^2+a=:U(\varphi),
  \end{equation}
  where $a$ is a constant of integration. It is well known, that $\varphi$ is a periodic function provided that the energy level set $H(x; y) = a$ of
  the Hamiltonian system $dH = 0$,
  $$
  H(x; y) = y^2 -\sigma x^2 +\frac{1}{2}\left( \frac{1}{c}-\beta\right)x^4
  $$
  contains an oval (a simple closed real curve free of critical points).
Depending on the properties of the bi-quadratic polynomial $U(\vp)$, we distinguish two cases, which give rise to different explicit solutions, both in term of the Jacobi elliptic functions.
\subsubsection{Dnoidal solutions}
Consider the case  $\frac{1}{c}-\beta>0$, $\sigma>0$, and $a<0$. Denote by $\varphi_0>\varphi_1>0$,  the positive roots of
$-\varphi^4+\frac{2c\sigma}{1-c\beta} \varphi^2+\frac{2ca}{1-c\beta}$.
Then, the profile equation \eqref{2.3} takes the form
$$
\varphi'^2=\frac{1-c\beta}{2c}(\varphi_0^2-\varphi^2)(\varphi^2-\varphi_1^2)
$$
Then $\varphi_1<\varphi<\varphi_0$ and up to translation the solution $\varphi$ is given by
\begin{equation}\label{2.4}
\varphi (x)=\varphi_0 dn (\alpha x, \kappa),
\end{equation}
where
\begin{equation}\label{2.5}
\varphi_0^2+\varphi_1^2=\frac{2c\sigma}{1-c\beta}, \; \; \kappa^2=\frac{\varphi_0^2-\varphi_1^2}{\varphi_0^2},
\; \; \alpha^2=\frac{1-c\beta}{2c}\varphi_0^2=\frac{\sigma}{2-\kappa^2}.
\end{equation}
Since the period of $dn$ is $2K(\kappa)$, then the fundamental period of $\varphi$ is $2T=\frac{2K(\kappa)}{\alpha}$.

The next case of consideration are the snoidal solutions.
 \subsubsection{Snoidal solutions}
 Let  $\frac{1}{c}-\beta<0$, $\sigma<0$ and $a<0$. Then
 $$
 \varphi'^2=\frac{c\beta-1}{2c}(\varphi_0^2-\varphi^2)(\frac{2c\sigma}{1-c\beta}-\varphi_0^2-\varphi^2).$$
 Up to translations the solution is given by
 \begin{equation}\label{2.8}
 \varphi(x)=\varphi_0sn(\alpha x, \kappa),
 \end{equation}
 where
 \begin{equation}\label{2.9}
 \kappa^2=\frac{(1-c\beta)\varphi_0^2}{2c\sigma-(1-c\beta)\varphi_0^2}, \; \; \alpha^2=-\frac{2c\sigma-(1-c\beta)\varphi_0^2}{2c}=-\frac{\sigma}{1+\kappa^2}.
 \end{equation}
 Since the period of $sn$ is $4K(\kappa)$, then the fundamental period of $\varphi$ is $2T=\frac{4K(\kappa)}{\alpha}$.

We formulate our findings in the following proposition.
\begin{proposition}
\label{prop:ex}
Let $(c, \be, \si)$ are three real parameters and $\ka\in (0,1)$.  Then, we can identify the following families of solutions of \eqref{2.3}.

If $c\neq 0$ and $\be<\f{1}{c}, \si>0$, then $\vp$ is a  family of dnoidal solutions given by \eqref{2.4}. Its parameters are given by
\begin{equation}
\label{301}
\vp_0^2=\f{2 \si}{(2-\ka^2) (\f{1}{c}- \be)},  \ \
\alpha^2=\frac{\sigma}{2-\kappa^2}. \ \
\end{equation}
whereas its fundamental period is $2T=\frac{2K(\kappa)}{\alpha}= \frac{2K(\kappa) \sqrt{2-\ka^2}}{\sqrt{\si}}$. Note that this is a three free parameter family, depending and uniquely determined by $(\f{1}{c}-\be , \si, \ka)\in \rone_+\times \rone_+\times (0,1)$.

If $c\neq 0$ and $\be>\f{1}{c}, \si<0$, we obtain the snoidal
family described in \eqref{2.8}, where
\begin{equation}
\label{302}
\vp_0^2=   \frac{2 \sigma \kappa^2}{(\f{1}{c}- \beta)(1+\kappa^2)}, \ \
\alpha^2= -\f{\si}{1+\ka^2},\ \
\end{equation}
and fundamental period given by $2T=4K(\ka) \f{ \sqrt{1+\ka^2}}{\sqrt{-\si}}$. This is also uniquely determined by three independent parameters as follows
$(\f{1}{c}-\be , \si, \ka)\in \rone_-\times \rone_-\times (0,1)$.
\end{proposition}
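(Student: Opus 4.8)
The plan is to verify that the proposed $\varphi$-profiles solve the ODE \eqref{2.3} and to extract the stated parameter formulas directly from the matching of coefficients, since the construction in the body of the excerpt has already reduced everything to the analysis of the biquadratic $U(\varphi)$. The proof is thus essentially a computation organized around the elliptic-function identities for $dn$ and $sn$, together with a careful bookkeeping of the three free parameters.

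For the dnoidal case I would start from the factored form $\varphi'^2=\frac{1-c\beta}{2c}(\varphi_0^2-\varphi^2)(\varphi^2-\varphi_1^2)$ derived above, and insert the ansatz $\varphi(x)=\varphi_0\,dn(\alpha x,\kappa)$. Using the standard identities $\frac{d}{du}dn(u,\kappa)=-\kappa^2 sn(u,\kappa)cn(u,\kappa)$, together with $sn^2+cn^2=1$ and $dn^2+\kappa^2 sn^2=1$, the left side becomes $\alpha^2\varphi_0^2\kappa^4 sn^2 cn^2$, and rewriting $sn^2,cn^2$ in terms of $dn^2=\varphi^2/\varphi_0^2$ turns the right side into a polynomial in $\varphi^2$. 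Matching the two expressions forces, first, $\varphi_1^2=\varphi_0^2(1-\kappa^2)$, which together with the definition $\kappa^2=(\varphi_0^2-\varphi_1^2)/\varphi_0^2$ in \eqref{2.5} is automatically consistent; second, the relation $\alpha^2=\frac{1-c\beta}{2c}\varphi_0^2$; and third, from the sum $\varphi_0^2+\varphi_1^2=\frac{2c\sigma}{1-c\beta}$, after substituting $\varphi_1^2=\varphi_0^2(1-\kappa^2)$, the closed-form expressions
\begin{equation*}
\varphi_0^2=\frac{2\sigma}{(2-\kappa^2)(\tfrac{1}{c}-\beta)},\qquad \alpha^2=\frac{\sigma}{2-\kappa^2},
\end{equation*}
which are exactly \eqref{301}. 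The period statement then follows from the fact that $dn(\cdot,\kappa)$ has period $2K(\kappa)$, so $\varphi$ has period $2K(\kappa)/\alpha$, and substituting the value of $\alpha$ gives $2T=2K(\kappa)\sqrt{2-\kappa^2}/\sqrt{\sigma}$.

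The snoidal case is entirely parallel: I would insert $\varphi(x)=\varphi_0\,sn(\alpha x,\kappa)$ into the factored profile equation of the snoidal subsection, use $\frac{d}{du}sn=cn\,dn$ and the same two Pythagorean-type identities to reduce both sides to polynomials in $\varphi^2=\varphi_0^2 sn^2$, and match coefficients. This yields $\alpha^2=-\frac{\sigma}{1+\kappa^2}$ and $\varphi_0^2=\frac{2\sigma\kappa^2}{(\tfrac1c-\beta)(1+\kappa^2)}$, namely \eqref{302}, with the sign conditions $\tfrac1c-\beta<0$, $\sigma<0$ guaranteeing positivity of $\varphi_0^2$ and $\alpha^2$; the period $4K(\kappa)$ of $sn$ gives $2T=4K(\kappa)\sqrt{1+\kappa^2}/\sqrt{-\sigma}$.

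The genuinely substantive point — and the only place where the argument is more than routine — is the claim that each family is a three-parameter family uniquely and smoothly parametrized by $(\tfrac1c-\beta,\sigma,\kappa)$ in the stated open set. For this I would observe that \eqref{301} (resp. \eqref{302}) expresses $\varphi_0,\alpha$ as explicit, everywhere-positive, smooth functions of these three quantities on $\mathbb{R}_+\times\mathbb{R}_+\times(0,1)$ (resp. on the corresponding negative cones), so the map from parameters to waves is well defined and injective; conversely, given a wave, the triple $(\tfrac1c-\beta,\sigma,\kappa)$ is recovered from the coefficients of $U(\varphi)$ and the amplitude, establishing the bijection. The mild obstacle is purely one of verifying that the two defining relations in \eqref{2.5} (resp. \eqref{2.9}) are not overdetermined — i.e. that the identity $\varphi_1^2=\varphi_0^2(1-\kappa^2)$ forced by the matching is compatible with the root-sum relation — which I would check directly; no genuine analytic difficulty arises.
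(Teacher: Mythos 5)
Your proposal is correct and follows essentially the same route as the paper: the paper's own justification is the derivation in the dnoidal/snoidal subsections, where the quartic $U(\varphi)$ is factored and the profile equation is matched against the ODEs satisfied by $dn$ and $sn$, yielding precisely the relations \eqref{2.5} and \eqref{2.9} that you recover by coefficient matching, and then \eqref{301}, \eqref{302} by eliminating $\varphi_1$ (resp.\ solving for $\varphi_0^2$). Your substitution-and-verification presentation, including the period computation and the sign/parametrization checks, is just the same computation run in the verification direction, so there is nothing to add.
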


 Now that we have identified the relevant nonlinear waves for the Benney model \eqref{1.1}, we focus our attention to the corresponding  linearized problem.
 \subsection{Linearized equations}
 We take the perturbation in the form
 \begin{equation}\label{l1}
 u(t,x)=e^{i\omega t}e^{i\frac{c}{2}(x-ct)}(\varphi(x-ct)+U(t,x-ct)), \; \;
 v(t,x)=\psi(x-ct)+V(t,x-ct)
 \end{equation}
 where $U(t,x)$ is complex valued function, $V(t,x)$ is real valued function. Plugging in the system (\ref{1.1}), using (\ref{2.1}), and
 ignoring all quadratic and higher order terms yields a linear equation
 for $(U,V)$.
 Furthermore, we split the  real and imaginary parts of complex valued function $U$  as $U=P+iQ$, which allows us to rewrite the linearized problem  as the following system

 \begin{equation}\label{l3}
 \left\{ \begin{array}{ll}
 -Q_t=-P_{xx}+\left( w-\frac{c^2}{4}\right) P+3\beta\varphi^2P+\varphi V+\psi P\\
 P_t=-Q_{xx}+\left( w-\frac{c^2}{4}\right) Q+\psi Q+\beta \varphi^2 Q\\
 V_t-cV_x=2\partial_x(\varphi P).
 \end{array} \right.
 \end{equation}
  Let us denote
  $$
  \mathcal{J}:=\left( \begin{matrix} 0&0&1 \\ 0& 2\partial_x &0 \\-1&0&0 \end{matrix} \right) , \; \; \mathcal{H}:=\left( \begin{matrix}
  L_1 &\varphi &0 \\ \varphi & \frac{c}{2} &0 \\ 0&0&L_2 \end{matrix} \right) ,
  $$
  where\footnote{Note that the operator $L_2$ is the standard operator $L_-$, if we were to consider the waves $\vp$ as solutions to the cubic NLS, see \eqref{2.2}. }
  $$
  \begin{array}{ll}
  L_1=-\partial_x^2+\sigma +\left(3\beta-\frac{1}{c}\right)\varphi^2 \\
  L_2=-\partial_x^2+\sigma +\left(\beta-\frac{1}{c}\right)\varphi^2.
  \end{array}
  $$
  Then the system (\ref{l3}) can be written of the form
  \begin{equation}\label{l4}
  \vec{Z}_t=\mathcal{J}\mathcal{H}\vec{Z}, \ \ \vec{Z}=\left(\begin{array}{c}
  P \\ V \\ Q
  \end{array}\right).
  \end{equation}
 The standard mapping  into a time independent problem $\vec{Z}\to e^{\la t} \vec{z}$ transforms the linear differential equation \eqref{l4} into the eigenvalue problem
  \begin{equation}
  \label{124}
  \mathcal{J}\mathcal{H}\vec{z}=\la \vec{z}.
  \end{equation}
 By general properties of Hamiltonian systems, and the operators $\cj, \ch$ in particular, if $\la$ is an eigenvalue of \eqref{124}, then so are, $\bar{\la}, -\la, -\bar{\la}$.
 We give now the following standard definition of spectral stability.
 \begin{definition}
 	\label{defi:10}
 	We say that the wave $\vp$ is spectrally unstable, if the eigenvalue problem \eqref{124} has a non-trivial solution $(\vec{u}, \la)$, so that $ \vec{z}\neq 0, \vec{z}\in H^2[-T,T]\times H^1[-T,T]\times H^2[-T,T]$ and $\la: \Re \la>0$.
 	
 	In the opposite case, that is \eqref{124} has no non-trivial solutions, with $\Re \la>0$,  we say that the wave is spectrally stable.
 \end{definition}
 {\bf Remark:} The definition of linear stability is closely related to the one given in Definition \ref{defi:10} for spectral stability. More precisely,  $\vp$ is a linearly stable wave, if the flow of the differential equation (or equivalently the semigroup generated by $\cj \ch$) has  Lyapunov exponent less or equal to  zero. Equivalently,
 \begin{equation}
 \label{LZ:10}
  \limsup_{t\to \infty} \f{\ln \|\vec{U}(t)\|}{t} \leq 0,
 \end{equation}
  for each initial data $\vec{U}(0)\in H^2[-T,T]\times H^1[-T,T]\times H^2[-T,T]$. It is a standard fact that these two notions coincide in the case of periodic domains, due to the fact that the spectrum of $\cj\ch$ consists of eigenvalues only.  A general justification of \eqref{LZ:10},  which applies to our case,  is provided in Theorem 2.2, \cite{LZ}.

  We are now ready to present our main results, which concern the spectral stability of the traveling periodic waves - of dnoidal and snoidal type.
  \subsection{Main results}
  The following is our main result, which concerns the stability of the dnoidal waves identified in Proposition \ref{prop:ex}.
  \begin{theorem}(Stability of the dnoidal waves)
  	\label{theo:10}
  	
  	Let $\om \in\rone$ and $c\neq 0, \be<\f{1}{c}, \si>0$. Then, the Benney sytsem \eqref{1.1} has a family of dnoidal solutions in the form
  	$$
  	(e^{i \om t} e^{i \f{c}{2}(x-c t)}
  	\vp(x-ct), \psi(x-ct)=(e^{i \om t} e^{i \f{c}{2}(x-c t)}
  		\vp(x-ct), -\f{1}{c}\vp^2(x- ct) + \si+\f{c^2}{4}-\om)
  		$$
where the dnoidal solutions $\vp$ are identified by \eqref{2.4}, whose parameters are given by \eqref{301}. These solutions are spatially periodic, provided
\begin{equation}
\label{304}
c \f{K(\ka)\sqrt{2-\ka^2}}{\sqrt{\si}}\in 2\pi {\mathbb Z}.
\end{equation}
Under these assumptions, the periodic waves are spectrally stable, in the sense of Definition \ref{defi:10}, for all values of the parameters, $\om\in\rone , \si>0, \be<\f{1}{c}, \ka\in (0,1)$, subject to \eqref{304}.
 \end{theorem}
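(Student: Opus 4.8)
The plan is to compute the Hamiltonian--Krein index of the operator $\cj\ch$ in \eqref{124} and to show that it vanishes. By the instability index theory of \cite{KKS1, KKS2, LZ}, the unstable spectrum of $\cj\ch$ is controlled by the identity
\begin{equation*}
k_{Ham}=k_r+2k_c+2k_i^-=n(\ch)-n(D),
\end{equation*}
where $n(\ch)$ is the number of negative eigenvalues of the self-adjoint operator $\ch$ and $D$ is the finite symmetric matrix obtained by restricting $\ch$ to the first-order generalized eigenvectors attached to $\ker(\cj\ch)$. Since $k_r,k_c,k_i^-\geq 0$, it suffices to establish $n(\ch)=n(D)$: this forces $k_r=k_c=k_i^-=0$, so \eqref{124} admits no eigenvalue with $\Re\la>0$, which is exactly spectral stability in the sense of Definition \ref{defi:10}.

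First I would determine $n(\ch)$ and $\ker\ch$ from the block structure. The scalar bottom-right entry $L_2$ decouples, and since $L_2\vp=0$ with $\vp=\vp_0 dn(\al x,\ka)>0$, the profile $\vp$ is the (nodeless) ground state of $L_2=L_-$; hence $n(L_2)=0$ and $\ker L_2=\mathrm{span}\{\vp\}$. For the remaining $2\times 2$ block $\left(\begin{smallmatrix} L_1 & \vp \\ \vp & c/2\end{smallmatrix}\right)$ I would take the Schur complement with respect to the scalar pivot $c/2$, which is invertible because $c\neq 0$; Sylvester's law of inertia then gives $n(\ch)=n(\tfrac{c}{2})+n(L_+)$, where $L_+:=L_1-\f{2}{c}\vp^2=-\p_x^2+\si+3(\be-\f1c)\vp^2$ is precisely the operator $L_+$ of the focusing cubic NLS for $\vp$ (consistent with the footnote identifying $L_2$ as $L_-$). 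For the dnoidal wave $\vp'$ has exactly two nodes per period, so the standard analysis yields $n(L_+)=1$ with $\ker L_+=\mathrm{span}\{\vp'\}$. Therefore $n(\ch)=1$ when $c>0$ and $n(\ch)=2$ when $c<0$, while $\ker\ch=\mathrm{span}\{(\vp',-\f{2}{c}\vp\vp',0),\,(0,0,\vp)\}$ is two-dimensional, its generators coming from translation and phase invariance.

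Next I would describe the generalized kernel of $\cj\ch$. The key structural point is that $\cj$ is \emph{not} invertible: $\ker\cj$ consists of the vectors that are constant in the $V$-slot, reflecting the conserved mean $\int_{-T}^{T}v\,dx$ of the Benney flow. Consequently $\ker(\cj\ch)$ is strictly larger than $\ker\ch$, acquiring a third direction $\vec z_0$ solving $\ch\vec z_0=(0,a,0)$ with $a\neq 0$, which is solvable since $\vp\perp\ker L_+$. I would then construct the Jordan chains attached to the three symmetries---phase, translation and the mean of $v$---by solving $\cj\ch\,\vec u_j=\vec z_j$; in the usual way these first-order generalized eigenvectors are produced by differentiating the explicit profile of Proposition \ref{prop:ex} with respect to the three parameters $\om$, $c$ and $\ga$. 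The matrix $D$ is the resulting $3\times 3$ Gram-type matrix $D_{ij}=\dpr{\ch\vec u_i}{\vec u_j}$, whose entries reduce to derivatives of the conserved quantities (mass $M$, momentum $P$, and $\int v$) with respect to $(\om,c,\ga)$.

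The main obstacle will be the explicit evaluation of $D$ and the determination of $n(D)$. After reducing the entries $D_{ij}$ to scalar quantities such as $\p_\om\!\int\vp^2$, the appropriate $c$- and $\ga$-derivatives, and their mixed counterparts, one must evaluate these in closed form via the elliptic-integral representation \eqref{301} of the dnoidal parameters and then track the signature of $D$ across the entire admissible range $\om\in\rone,\ \si>0,\ \be<\f1c,\ \ka\in(0,1)$ subject to \eqref{304}. The delicate feature is that $n(D)$, like $n(\ch)$, depends on the sign of $c$, and the substance of the theorem is that the two indices coincide in every regime---$n(D)=1$ for $c>0$ and $n(D)=2$ for $c<0$. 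Establishing these signs, together with the careful bookkeeping forced by the degenerate symplectic form $\cj$ and the extra conserved mean of $v$, is the heart of the argument; once $n(\ch)=n(D)$ is secured, the index identity gives $k_{Ham}=0$ and hence spectral stability.
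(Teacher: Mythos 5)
Your setup coincides with the paper's: the index identity \eqref{ih2}, the splitting $n(\ch)=n(\ch_0)+n(L_2)$ with $\ch_0=\left(\begin{smallmatrix} L_1 & \vp\\ \vp & c/2\end{smallmatrix}\right)$, the identification $L_1-\f{2}{c}\vp^2=L$, and the three Jordan chains generated by phase, translation and the constant direction in the $V$-slot are exactly the ingredients of Propositions \ref{prop:spec}, \ref{prop1} and \ref{prop:12}. The genuine gap is that the decisive step is never carried out: you do not determine $n(D)$, you only propose to compute all entries of the $3\times 3$ matrix through parameter derivatives of conserved quantities and then ``track the signature of $D$ across the entire admissible range'' (a route that, incidentally, re-imports the smooth parameter dependence that the index-theoretic approach is designed to avoid). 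That signature computation is precisely the hard part, and it is also far more than is needed. The paper's proof rests on a shortcut your plan misses: since $k_{Ham}=n(\ch)-n(D)=k_r+2k_c+2k_i^-\geq 0$, one gets $n(D)\leq n(\ch)=1$ for free, so it suffices to exhibit a \emph{single} negative direction of $D$. Accordingly, the paper computes only the one entry $D_{22}=\dpr{L^{-1}\vp}{\vp}$, by constructing the Green's function of $L$ from $\vp'$ and a second, regularized solution $\psi$, arriving at the closed form \eqref{L:12}, which is negative for every $\ka\in(0,1)$ and all admissible parameters; hence $n(D)=1$ and $k_{Ham}=0$. The same computation verifies the nondegeneracy hypothesis \eqref{ass2}, which is what guarantees that the Jordan chains terminate at length two, i.e.\ \eqref{430}. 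You assume this termination silently; without it, the matrix you form need not be the matrix to which the counting theorem applies.

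Separately, your Morse index count for $c<0$ is wrong. In the congruence (Schur complement) argument the pivot is not a $1\times 1$ scalar: it is multiplication by the constant $\f{c}{2}$ on the infinite-dimensional space $L^2[-T,T]$, whose negative subspace for $c<0$ is all of $L^2$. Sylvester's inertia then gives $n(\ch)=\infty$ rather than $n(\ch)=2$ (indeed $c/2<0$ lies in the essential spectrum of $\ch_0$), and the formula \eqref{ih2}, which presupposes a finite Morse index, is not applicable as stated; so the proposed dichotomy ``$n(D)=1$ for $c>0$, $n(D)=2$ for $c<0$'' cannot be implemented. For $c>0$ your Schur argument is correct and reproduces the paper's $n(\ch_0)=1$, which the paper instead obtains by completing the square in \eqref{s2} --- note that this computation also uses $c>0$. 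A smaller point: node counting alone only places $0$ as the second or third periodic eigenvalue of the Hill operator $L$, so ``the standard analysis yields $n(L_+)=1$'' needs the explicit Lam\'e spectrum, as in \eqref{l:10}, or an equivalent argument to pin down $n(L)=1$.
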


 {\bf Remark:} In \cite{AVH}, the authors proved that dnoidal solutions are orbitally stable for $\beta\leq 0$ and for $\beta>0$ and $8\beta\sigma-3c(1-\beta c)^2\leq 0$. This is achieved by evaluating the number of negative eigenvalues of the operator of linearization around the periodic waves and number of positive eigenvalues of the Hessian of $d(\omega,c)=E(u,v)-\frac{c}{4}P(u,v)-\frac{\omega}{2}M(u,v)$. We extend this result herein to the whole domain of the parameters.

Our next result concerns the instability of the snoidal waves, also identified in Proposition \ref{prop:ex}.
\begin{theorem}(Instability of the snoidal solutions)
	\label{theo:20}
	
	Let $\om\in\rone$ and $c\neq 0, \be>\f{1}{c}, \si<0$. Then, the Benney system has a family of snoidal solutions
	$$
	(e^{i \om t} e^{i \f{c}{2}(x-c t)}
	\vp(x-ct), -\f{1}{c}\vp^2(x- ct) + \si+\f{c^2}{4}-\om)
	$$
	where  $\vp$ is described in \eqref{2.8}, together with \eqref{302}. These waves are periodic exactly when
	\begin{equation}
	\label{306}
	c K(\ka) \f{\sqrt{1+\ka^2}}{\sqrt{-\si}}\in \pi {\mathbb Z}.
	\end{equation}
The snoidal periodic waves are spectrally unstable (with at least one real and positive eigenvalue) for all values of the parameters $\om\in \rone, \si<0, \be>\f{1}{c}, \ka\in (0,1)$, subject to \eqref{306}.
\end{theorem}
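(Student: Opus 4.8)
The plan is to apply the Hamiltonian--Krein instability index of \cite{LZ, KKS1, KKS2, Pel} to the eigenvalue problem \eqref{124}. In its reduced form this index reads $k_{Ham}:=k_r+2k_c+2k_i^-=n(\mathcal{L}_0)-n(D)$, where $k_r$ counts the positive real eigenvalues of $\mathcal{J}\mathcal{H}$, $k_c$ the quadruplets of genuinely complex eigenvalues, $k_i^-$ the purely imaginary eigenvalues of negative Krein signature, $n(\mathcal{L}_0)$ is the Morse index of the effective (reduced) energy, and $n(D)$ is the number of negative directions of the constraint matrix on the generalized kernel. Since $2k_c+2k_i^-$ is even, $k_r$ and $n(\mathcal{L}_0)-n(D)$ have the same parity; it therefore suffices to show that $n(\mathcal{L}_0)-n(D)$ is \emph{odd}, which forces $k_r\geq1$ and produces the asserted positive real eigenvalue.

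The first issue is that $\mathcal{H}$ itself has infinite Morse index when $c<0$, since its $V$--$V$ entry is the constant $\f{c}{2}$; this is remedied by slaving $V$ to $P$. Using the second equation of \eqref{l3}, at the static level $cV'=-2(\vp P)'$ yields $V=-\f{2}{c}\vp P$ modulo the constant direction $(0,1,0)^T\in\ker\mathcal{J}$, which encodes the conserved quantity $\int_{-T}^{T}v\,dx$. Substituting collapses $\mathcal{H}$ to the NLS--type energy $\mathcal{L}_0=\mathrm{diag}(\tilde{L}_1,L_2)$ on $(P,Q)$, where $\tilde{L}_1:=L_1-\f{2}{c}\vp^2=-\p_x^2+\si+3(\be-\f1c)\vp^2$ is precisely the operator with $\tilde{L}_1\vp'=0$. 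Crucially, $\mathcal{L}_0$ has finite Morse index $n(\mathcal{L}_0)=n(\tilde{L}_1)+n(L_2)$ for either sign of $c$.

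The core computation is the explicit spectral count for the two Lam\'e operators on the snoidal period $4K(\ka)$. Rescaling $x\mapsto\al x$ and inserting \eqref{302} gives $L_2=\al^2(-\p_y^2+2\ka^2 sn^2)+\si$ and $\tilde{L}_1=\al^2(-\p_y^2+6\ka^2 sn^2)+\si$. On this period the antiperiodic Jacobi modes are admissible, and one identifies $L_2\vp=0$ with $\vp\propto sn$ as the third eigenfunction (with $dn$ and $cn$ strictly below), so $n(L_2)=2$; likewise $\tilde{L}_1\vp'=0$ with $\vp'\propto cn\,dn$ as the second eigenfunction (only the nodeless ground state below), so $n(\tilde{L}_1)=1$. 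Hence $n(\mathcal{L}_0)=3$. This is exactly where the snoidal regime differs from the dnoidal one, where $\vp\propto dn$ is the ground state of $L_2$ on the shorter period $2K(\ka)$, giving $n(L_2)=0$ and $n(\mathcal{L}_0)=1$.

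It remains to show $n(D)$ is even. The reduced kernel is $\mathrm{span}\{(\vp',0)^T,(0,\vp)^T\}$ (translation and phase), so $D$ is a $2\times2$ matrix built from the Jordan--chain data of $\tilde{L}_1$ and $L_2$ -- equivalently the Hessian of $d(\om,c)=E-\f{c}{4}P-\f{\om}{2}M$, augmented by the $\ker\mathcal{J}$ direction. I would compute $D$ in closed form from \eqref{302} and the periodicity relation \eqref{306} and verify that $n(D)$ is even (I expect $n(D)=2$); then $n(\mathcal{L}_0)-n(D)=3-n(D)$ is odd and $k_r\geq1$. The main obstacle is twofold: first, rigorously justifying the slaving of $V$ together with the bookkeeping of $\ker\mathcal{J}$, so that the reduced pencil faithfully reproduces all eigenvalues of \eqref{124} off the imaginary axis; and second, the inertia computation for $D$, which requires differentiating the elliptic--integral charges under \eqref{306} and on which the final parity hinges. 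The structural fact underpinning everything is that on the genuine snoidal period $4K(\ka)$ the extra antiperiodic modes make $\vp\propto sn$ only the third eigenfunction of $L_2$, raising $n(L_2)$ to $2$ and driving the instability.
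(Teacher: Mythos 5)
Your high-level strategy is the same as the paper's: apply the index formula \eqref{ih2} and use parity ($k_r\equiv n(\cl)-n(D)\pmod 2$) to force $k_r\geq 1$, and your scalar counts $n(L)=1$, $n(L_2)=2$ on the period $4K(\ka)$ agree exactly with Proposition \ref{prop:spec}. But there are two genuine gaps, and the first is structural. The slaving $V=-\f{2}{c}\vp P$ correctly reduces the \emph{quadratic form} of $\ch$ (it is the Schur-complement computation behind Proposition \ref{prop:12}, valid for $c>0$), but it does not reduce the \emph{eigenvalue problem} \eqref{124}: for $\la\neq 0$ the second equation of the system forces $V=(\la-c\p_x)^{-1}\,2\p_x(\vp P)$, a nonlocal, $\la$-dependent elimination, so what you obtain is a rational operator pencil in $\la$, not a problem of the form $\cj_0\cl_0 u=\la u$ to which \eqref{ih2} can be applied with $n(\cl_0)$. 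The mismatch shows up concretely in the kernel bookkeeping: your reduced problem generically has a four-dimensional generalized kernel (translation, phase, one adjoint vector each), whereas Proposition \ref{prop1} shows the generalized kernel of $\cj\ch$ is five-dimensional --- the direction $(0,1,0)^T\in Ker(\cj)$ supplies a third vector $\vec{\psi}_3$, so $D$ is $3\times 3$, with $D_{33}=cT$ and $D_{13}=-\f{1}{2c}\dpr{\vp}{\vp}\neq 0$ entering $\det(D)=D_{33}\det(\tilde{D})-\left(\f{1}{2c}\int\vp^2\right)^2D_{22}$. Your $2\times 2$ matrix is a different object, and a parity conclusion drawn from it is not justified. (Your observation that $\ch$ has infinite Morse index when $c<0$ is a genuine subtlety --- the square completion in Proposition \ref{prop:12} does need $c>0$ --- but the naive slaving cannot be the fix, precisely because it is not spectrum-preserving.)

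The second gap is that the decisive computation is missing. You write that you ``would compute $D$ in closed form \ldots and verify that $n(D)$ is even (I expect $n(D)=2$)''; this is the heart of the proof and occupies most of the paper's Section \ref{sec:3}. In fact the paper does not determine $n(D)$ in closed form for all parameters: it computes $\dpr{L^{-1}\vp}{\vp}$, $\dpr{L_2^{-1}\vp'}{\vp'}$, $\int\vp^2$, $\int\vp^4$ via Green's functions and elliptic integrals, and establishes $\det(D)>0$ only in the asymptotic regime $\be=\f{1}{c}+\eps$, $0<\eps\ll 1$ (Proposition \ref{prop:18}), where the term involving $\dpr{\vp^2}{\vp^2}$ dominates at order $\eps^{-4}$. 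That gives $n(D)\in\{0,2\}$, hence $k_{Ham}\in\{1,3\}$ and $k_r\geq 1$, but only near $\be=\f{1}{c}$. To cover all $\be>\f{1}{c}$ the paper then needs a separate continuation/rigidity argument: since \eqref{ass2} holds for every $\ka$ (via $\dpr{L^{-1}\vp}{\vp}=\f{1}{\al(\be-\f{1}{c})}F(\ka)>0$), the generalized kernel of $\cj\ch$ remains exactly five-dimensional across the whole parameter range, so the real unstable eigenvalue present for small $\eps$ can never pass through zero and therefore persists. Your proposal contains neither the asymptotic determinant computation nor this continuation step, so the parity of $n(D)$ --- on which the entire argument hinges --- is left unestablished.
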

The plan for the paper, as well as some major points are explained below.  In Section \ref{sec:2}, we introduce the basics of the instability index theory. We also outline well-known results about the scalar  linearized Schr\"odinger operators $L_1, L_2$ identified earlier, as well as a related operator $L$, which plays significant role in our spectral analysis. This allows us to compute the Morse index of the operator $\ch$ as well as the kernel and the generalized kernel of $\cj \ch$, see Proposition \ref{prop1}. In Section \ref{sec:3}, we deploy the instability index theory to reduce matters to the Morse index of a scalar two-by-two matrix $D$. For the dnoidal case, the computations here are involved,  since only one of the  entries of $D$ is (barely) explicitly computable, and it involves the construction of the Green's function for the  Schr\"odinger operator $L^{-1}$. This is however enough to conclude stability. In the snoidal case, one argues by computing selected (easier) quantities in the limit $0<\be-\f{1}{c}<<1$, which allows one to concludes that real  instability exists close to this limit. Then, a continuation argument, coupled with an earlier rigidity argument about\footnote{establishing that the generalized kernel of $\cj\ch$ remains five dimensional and importantly, does not change across the parameter domain} $Ker(\cj\ch)$ confirms that the real instability persists across the whole domain of parameters.
\section{Preliminaries}
\label{sec:2}
 We first review the basics of the instability index theory, as developed in  \cite{KKS1, KKS2, LZ, Pel}.
 \subsection{Instability index count}
 \label{sec:2.1}
 We follow the notations and presentation in \cite{KKS1, KKS2}, but the same results appears in \cite{Pel}, while  the most general version can be found in \cite{LZ}.  Consider the Hamiltonian  eigenvalue problem
 \begin{equation}
 \label{ih}
  \mathcal{I}\mathcal{\cl}u=\lambda u,
 \end{equation}
where  $\ci^*=-\ci, \cl^*=\cl$ and $\ci, \ch: \overline{\ci f}=\ci \bar{f},\ \  \overline{\ch f}=\ch \bar{f}$, i.e. $\ci, \ch$ map real-valued elements  into real-valued elements.

 Introduce the Morse index of a self-adjoint, bounded from below operator $S$, by setting $n(S)=\# \{\la\in \si(S): \la<0\}$, counted with multiplicities. Let $k_r:=\#\{\la\in  \si_{pt.}(\ci \cl): \la>0\}$ represents the number of positive real
 eigenvalues of $\ci\cl$, counted with multiplicities, $k_c:=\#\{\la\in \si_{pt.}(\ci \cl): \Re \la>0, \Im\la>0\}$ - the number of quadruplets of complex eigenvalues of $\ci \cl$ with non-zero
 real and imaginary parts, whereas
 $$
 k_i^-=\#\{i \la, \la>0: \ci\cl f=i \la f, \dpr{\cl f}{f}<0\}
 $$
 is the number of pairs of purely imaginary eigenvalues of
 negative Krein signature.
 Consider the generalized kernel of $\mathcal{J}\mathcal{H}$,
 $$
 gker (\ci \mathcal{\cl})=span\cup_{l=1}^\infty\ker (\mathcal{\ci\mathcal{L}})^l.
 $$
 Under general conditions, described in \cite{KKS1}, one has that $gker (\ci \mathcal{\cl})$ is finite dimensional, so one can take a basis\footnote{In the applications, one  needs to have an explicit form of such a basis anyway, before any determination of the stability can be made. In a way, we shall need to check the finite dimensionality of
 	$gker (\ci \mathcal{\cl})$}, say $\eta_1, \ldots, \eta_N$. Then, we introduce a
 symmetric matrix $D$ by
 $$
 D:=\{ \{D_{ij}\}_{i,j=1}^N \; : \; D_{ij}=\langle \mathcal{\cl} \eta_i, \eta_j\rangle \}.
 $$
 We are now ready to state the main result of this section, namely  the  following formula for the Hamiltonian index,
 \begin{equation}
 \label{ih2}
 k_{Ham}:=k_r+2k_c+2k_{i}^-=n(\mathcal{\cl})-n(D).
 \end{equation}
 Clearly, spectral stability for \eqref{ih} follows from $k_{Ham}=0$, but such a condition is not necessary for spectral stability. For example, one might encounter a situation where $k_{Ham}=2$, but with $k_i^-=1$, which is an example of spectrally stable configuration with a non-zero $K_{Ham.}$. On the other hand, it is clear that if $k_{Ham}$ is an odd integer, then $k_r\geq 1$, guaranteeing instability.

 \subsection{Spectral information about $\cj\ch$}

 Due to the results in Section \ref{sec:2.1}, it becomes clear that we need a determination of a basis of $gker(\cj\ch)$. It turns out that it is helpful to introduce another Schr\"odinger operator, namely
 $$
 L=-\partial_x^2+\sigma +3\left(\beta-\frac{1}{c}\right)\varphi^2.
 $$
 For context, this is the well-known operator $L_+$, if we were to consider the waves as solutions to the standard cubic NLS, see \eqref{2.2}.
 \subsubsection{The spectra  of $L, L_2$}
 For self-adjoint operator $H$ acting on
$L_{per}^2[0; T]$ with domain $D(H) = H^2([0; T])$, we have  that its
spectrum is purely discrete,
$$ \lambda_0 < \mu_0\leq \mu_1 <\lambda_1 \leq \lambda_2 < \mu_2\leq \mu_3<\lambda_3\leq \lambda_4 < ...$$
Eigenvalues $\lambda_i$, $i=0,1,2...$ corresponds to the periodic eigenvalues, while $\mu_i$, $i=0,1,2...$ corresponds to the semi-periodic eigenvalues. Then, we have that $Hf=\lambda f$ has a solution of period $T$ if and only if $\lambda=\lambda_i$, $i=0,1,2,...$ and a solution of period $2T$ if and only if $\lambda=\lambda_i$, $\lambda=\mu_i$, $i=0,1,2,...$.

  We start with the observation that  $L\varphi'=0$, which is obtained by differentiating equation (\ref{2.2}) respect to $x$.  Also, $L_2\vp=0$, which is just a restatement of \eqref{2.2}.  It is actually helpful, for the rest of the argument, to list the lowest few eigenvalues for both operators $L, L_2$, where $\vp$ is either the dnoidal solution \eqref{2.4} or the snoidal solution \eqref{2.8}. In fact, matters reduce to the explicit Hill operators
 \begin{eqnarray*}
 \Lambda_1 &=& -\partial_{y}^{2}+6k^{2}sn^{2}(y, k) \\
 \Lambda_2 &=& -\partial_y^2+2k^{2}sn^{2}(y, k)
 \end{eqnarray*}
 It is well-known that the first four   eigenvalues of $\La_1$
 with periodic boundary conditions on $[0, 4K(k)]$ are
 simple.
 These eigenvalues and corresponding eigenfunctions are given by
 $$
\left\{\begin{array}{ll}
\nu_{0}=2+2\kappa^2-2\sqrt{1-\kappa^2+\kappa^4},
& \phi_{0}(y)=1-(1+\kappa^2-\sqrt{1-\kappa^{2}
	+\kappa^{4}})sn^{2}(y, \kappa),\\[1mm]
\nu_{1}=1+\kappa^{2}, & \phi_{1}(y)=cn(y, \kappa)dn(y, \kappa)
=sn'(y, \kappa),\\[1mm]
\nu_{2}=1+4\kappa^{2}, & \phi_{2}(y)=sn(y, \kappa)dn(y, \kappa)
=-cn'(y, \kappa),\\[1mm]
\nu_{3}=4+\kappa^{2}, & \phi_{3}(y)=sn(y, \kappa)cn(y, \kappa)
=-\kappa^{-2}dn'(y, \kappa).\\[1mm]
\end{array}\right.
$$
 Regarding $\La_2$,  the
 first three eigenvalues and the corresponding eigenfunctions with
 periodic boundary conditions on $[0, 4K(k)]$ are simple and
 $$\begin{array}{ll}
 \epsilon_0=k^2, & \theta_0(y)=dn(y, k),\\[1mm]
 \epsilon_1=1, & \theta_1(y)=cn(y, k),\\[1mm]
 \epsilon_2=1+k^2, & \theta_2(y)=sn(y, k).
 \end{array}
 $$
  In the dnoidal case, using that $\kappa^2sn^2x+dn^2x=1$ and (\ref{2.4}),  (\ref{2.5}), we get
  \begin{equation}
  \label{l:10}
  L=\alpha^2[\Lambda_1-(4+\kappa^2)].
  \end{equation}
  Note that in this case $\nu_0$ and $\nu_3$ corresponds to the periodic eigenvalues, while $\nu_1$ and $\nu_2$ corresponds to the semi-periodic eigenvalues. It follows that the first two eigenvalues of the operator
  $L$, equipped with periodic boundary condition on $[-T,T]$
  are simple, zero is the second eigenvalue, and $n(L)=1$.
  In the snoidal case, using (\ref{2.8}) and (\ref{2.9}), we have
  \begin{equation}
  \label{l:10a}
  L=\alpha^2[\Lambda_1-(1+\kappa^2)].
  \end{equation}

  It follows again that  zero is the second eigenvalue, and $n(L)=1$.

  Regarding the operator $L_2$, in the dnoidal case, using again (\ref{2.4}),  (\ref{2.5}), we have that
  $$
  L_2=\al^2[\La_2-k^2],
  $$
  whence using the spectral information available for $\La_2$, we conclude $L_2\geq 0$, $n(L_2)=0$.

  In the snoidal case, we have
  $$
  L_2=\al^2[\La_2-(1+k^2)],
  $$
  whence the spectral description of $\La_2$ allows us to conclude that $n(L_2)=2$, with a simple eigenvalue at zero. We collect our results about $L, L_2$ in the following proposition.
  \begin{proposition}
  	\label{prop:spec}
  	Let $\vp$ be either the dnoidal wave \eqref{2.4} or the snoidal wave \eqref{2.8}. Then, \begin{itemize}
  		\item In both the dnoidal and snoidal cases, the Hill operator $L$, equipped with periodic boundary conditions on $[-T,T]$, has Morse index $n(L)=1$ and   $Ker[L]=span[\vp']$.
  		\item  In the dnoidal case,  the operator $L_2$ has Morse index $n(L_2)=0$, $Ker[L_2]=span[\vp]$.
  		\item In the snoidal case,  the operator $L_2$ has Morse index $n(L_2)=2$,
  		$Ker[L_2]=span[\vp]$.
  	\end{itemize}
  \end{proposition}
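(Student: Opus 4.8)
The plan is to reduce everything to the explicit Lamé operators $\Lambda_1,\Lambda_2$ and then transfer their tabulated spectra, the one substantive issue being the correct Floquet bookkeeping. The algebraic reductions are already in hand: \eqref{l:10} and \eqref{l:10a} write $L=\alpha^2[\Lambda_1-(4+\kappa^2)]$ in the dnoidal case and $L=\alpha^2[\Lambda_1-(1+\kappa^2)]$ in the snoidal case, while the identity $\kappa^2 sn^2+dn^2=1$ likewise gives $L_2=\alpha^2[\Lambda_2-\kappa^2]$ (dnoidal) and $L_2=\alpha^2[\Lambda_2-(1+\kappa^2)]$ (snoidal). Since each operator is an affine rescaling of $\Lambda_1$ or $\Lambda_2$, its spectrum is obtained from the band edges $\nu_j$ (resp. $\epsilon_j$) by the indicated shift and dilation by $\alpha^2>0$, so counting negative eigenvalues reduces to locating the shifts $4+\kappa^2,\,1+\kappa^2,\,\kappa^2$ among the $\nu_j$ (resp. $\epsilon_j$). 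The kernels are pinned down a priori: differentiating \eqref{2.2} gives $L\vp'=0$, and \eqref{2.2} itself reads $L_2\vp=0$.

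The delicate step is deciding which Lamé eigenfunctions are admissible for the periodic problem on $[-T,T]$, and this differs between the two waves. Under $y=\alpha x$, periodic boundary conditions on $[-T,T]$ become periodic boundary conditions on a window of length $2\alpha T$. For the dnoidal wave $\vp_0 dn(\alpha x)$ the minimal period is $2T=2K(\kappa)/\alpha$, so the window has length exactly $2K(\kappa)$, one fundamental period of the potential $sn^2$; the Floquet multiplier must equal $+1$, so only the genuinely $2K$-periodic edges survive, namely $\nu_0,\nu_3,\nu_4,\dots$ for $\Lambda_1$ and $\epsilon_0,\dots$ for $\Lambda_2$, while the semi-periodic edges $\nu_1,\nu_2$ (resp. $\epsilon_1,\epsilon_2$) are \emph{excluded}. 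For the snoidal wave $\vp_0 sn(\alpha x)$ the minimal period is $2T=4K(\kappa)/\alpha$, so the window is two full periods; now both multipliers $\pm1$ occur, and the full list of low band edges $\nu_0<\nu_1<\nu_2<\dots$ (resp. $\epsilon_0<\epsilon_1<\epsilon_2$) enters, each as a simple eigenvalue.

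Granting this classification the four counts follow by inspection. For $L$ dnoidal the admissible eigenvalues are $\alpha^2(\nu_0-(4+\kappa^2))$, $\alpha^2(\nu_3-(4+\kappa^2))=0$, $\alpha^2(\nu_4-(4+\kappa^2)),\dots$; a direct check gives $\nu_0<4+\kappa^2<\nu_4$ for $\kappa\in(0,1)$ (after squaring, the upper bound reduces to $3\kappa^4>0$), so there is exactly one negative eigenvalue and $0$ is the simple second eigenvalue with eigenfunction $\phi_3=sn\,cn\propto\vp'$, giving $n(L)=1$ and $Ker[L]=span[\vp']$. For $L$ snoidal the admissible list begins $\alpha^2(\nu_0-(1+\kappa^2))<0$, then $\alpha^2(\nu_1-(1+\kappa^2))=0$ with eigenfunction $\phi_1=sn'\propto\vp'$, then the strictly positive $3\alpha^2\kappa^2,\,3\alpha^2,\dots$ (here $\nu_0<1+\kappa^2$ is equivalent, after squaring, to $3(1-\kappa^2)^2>0$); again $n(L)=1$, $Ker[L]=span[\vp']$. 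For $L_2$ dnoidal only $\epsilon_0=\kappa^2$ lies at or below the shift, and since it is the simple, nodeless ground state $\theta_0=dn\propto\vp$, the shifted eigenvalue $0$ is the bottom of the spectrum, whence $n(L_2)=0$ and $Ker[L_2]=span[\vp]$. For $L_2$ snoidal all three edges enter: $\alpha^2(\epsilon_0-(1+\kappa^2))=-\alpha^2$ and $\alpha^2(\epsilon_1-(1+\kappa^2))=-\alpha^2\kappa^2$ are negative, while $\alpha^2(\epsilon_2-(1+\kappa^2))=0$ has eigenfunction $\theta_2=sn\propto\vp$, so $n(L_2)=2$ and $Ker[L_2]=span[\vp]$.

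The main obstacle is thus not any single computation but getting the Floquet classification right, together with the simplicity of the low-lying Lamé eigenvalues. Simplicity is exactly what forces the zero eigenvalue of $L$ (resp. $L_2$) to be one-dimensional, so that the kernels are \emph{precisely} $span[\vp']$ and $span[\vp]$ and not larger; for this I would invoke the classical fact, recalled above, that the first four eigenvalues of $\Lambda_1$ and the first three of $\Lambda_2$ are simple. As a consistency check I would use Sturm oscillation theory: $\vp=\vp_0 dn>0$ is nodeless and hence must be the ground state of $L_2$ in the dnoidal case ($n(L_2)=0$), whereas $\vp'$ and the snoidal $\vp=\vp_0 sn$ both change sign, consistent with $0$ being the second eigenvalue of $L$ and the third eigenvalue of $L_2$, respectively.
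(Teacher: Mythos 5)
Your proof is correct and follows essentially the same route as the paper: reduce $L$ and $L_2$ to the Lam\'e operators $\Lambda_1,\Lambda_2$ via \eqref{l:10}, \eqref{l:10a} and their $L_2$ analogues, then transfer the tabulated simple eigenvalues, keeping track of which edges are periodic versus semi-periodic on the one-period (dnoidal) versus two-period (snoidal) window. Your treatment is in fact slightly more explicit than the paper's (the inequality checks $3\kappa^4>0$ and $3(1-\kappa^2)^2>0$, and the Floquet classification for $\Lambda_2$), but the underlying argument is identical.
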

 We are now ready to describe the kernel and the generalized kernel of $\cj\ch$.
 \subsubsection{Generalized Kernel of $\cj \ch$}
  \begin{proposition}
  	\label{prop1}
  	 Let $\vp$ be either the dnoidal wave \eqref{2.4} or the snoidal wave \eqref{2.8}.
  	Then, the kernel of $\mathcal{H}$ is two dimensional, namely
  	\begin{equation}
  	\label{kerh}
  		Ker[\ch]=span[\left(\begin{array}{c} \vp' \\ -\f{2}{c} \vp\vp' \\ 0\end{array} \right),\left(\begin{array}{c} 0 \\ 0\\ \vp \end{array} \right) ].
  	\end{equation}

\noindent  	In addition, under the assumption
  	\begin{equation}
  	\label{ass2}
  	\dpr{L^{-1}\vp}{\vp}\neq 0,
  	\end{equation}
  	we can identify all the generalized eigenvectors as follows
  	\begin{equation}
  	\label{402}
  	gKer(\cj\ch)\ominus Ker(\ch)=span\left[\begin{pmatrix}
  	\f{1}{2c(c\be-1)} \vp \\
  	- \f{\be}{c(c\be-1)} \vp^2 \\
  	\\ L_2^{-1} \vp'.
  	\end{pmatrix}, \begin{pmatrix}
  	-L^{-1}\vp\\
  	\f{2}{c} \vp L^{-1}\vp\\
  	0
  	\end{pmatrix}, \begin{pmatrix}
  	0 \\ 1 \\ 0
  	\end{pmatrix} \right].
  	\end{equation}
  \end{proposition}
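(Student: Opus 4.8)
\emph{Strategy.} The plan is to compute $Ker(\ch)$ by hand, to verify by direct substitution that the three columns in \eqref{402} are second-order generalized eigenvectors sitting over $Ker(\ch)$, and finally to prove that these five vectors exhaust $gKer(\cj\ch)$; this last completeness step is where the hypothesis \eqref{ass2} is indispensable.

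\emph{The kernel.} Writing $\vec z=(P,V,Q)^T$, the equation $\ch\vec z=0$ reads $L_1P+\vp V=0$, $\vp P+\f{c}{2}V=0$ and $L_2Q=0$. By Proposition \ref{prop:spec} the last equation forces $Q\in span[\vp]$; the middle one gives $V=-\f{2}{c}\vp P$; and substituting into the first and using the identity $L_1=L+\f{2}{c}\vp^2$ collapses it to $LP=0$, whence $P\in span[\vp']$. This yields exactly the two-dimensional kernel \eqref{kerh}; write its basis as $\Phi_1=(\vp',-\f{2}{c}\vp\vp',0)^T$ and $\Phi_2=(0,0,\vp)^T$. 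Here $L^{-1}\vp$ and $L_2^{-1}\vp'$ are well defined, since $\dpr{\vp}{\vp'}=\f{1}{2}\int(\vp^2)'\,dx=0$ places $\vp\perp Ker(L)=span[\vp']$ and $\vp'\perp Ker(L_2)=span[\vp]$.

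\emph{The generalized eigenvectors.} Denote the three columns of \eqref{402}, in order, by $\xi_1,\xi_2,\xi_3$. The profile identities $L_2\vp=0$, $L\vp'=0$, $L_1\vp=2\be\vp^3$ (all read off from \eqref{2.2}), together with $L_1L^{-1}\vp=\vp+\f{2}{c}\vp^2L^{-1}\vp$, give after a short computation $\ch\xi_1=(0,-\f{1}{2c}\vp^2,\vp')^T$, $\ch\xi_2=(-\vp,0,0)^T$ and $\ch\xi_3=(\vp,\f{c}{2},0)^T$. Applying $\cj$, which acts by $\cj(a,b,d)^T=(d,2b',-a)^T$, produces
\begin{equation*}
\cj\ch\xi_1=\Phi_1,\qquad \cj\ch\xi_2=\Phi_2,\qquad \cj\ch\xi_3=-\Phi_2 .
\end{equation*}
Thus $\xi_1,\xi_2,\xi_3\in Ker((\cj\ch)^2)$, and since they are visibly independent of $\Phi_1,\Phi_2$ and of one another, $gKer(\cj\ch)$ contains the five-dimensional space $span[\Phi_1,\Phi_2,\xi_1,\xi_2,\xi_3]$.

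\emph{Completeness and the main obstacle.} For the reverse inclusion I would argue by a dimension count on iterated kernels. Since $2\p_x$ is the only non-invertible block of $\cj$, one has $Ker(\cj)=span[(0,1,0)^T]$, so $\vec z\in Ker(\cj\ch)$ iff $\ch\vec z=\mu(0,1,0)^T$; solving this exactly as in the kernel computation (now $LP=-\f{2\mu}{c}\vp$, solvable because $\vp\perp\vp'$) shows $Ker(\cj\ch)$ is three-dimensional, spanned by $\Phi_1$, $\Phi_2$ and $g:=\xi_2+\xi_3\in Ker(\cj\ch)\setminus Ker(\ch)$. To pass to the next power I use the Fredholm alternative: since $(\cj\ch)^*=-\ch\cj$, the equation $\cj\ch\vec w=\vec b$ is solvable iff $\vec b\perp Ker(\ch\cj)$, and a direct calculation gives the three-dimensional cokernel $Ker(\ch\cj)=span[(-\vp,0,0)^T,(0,1,0)^T,(0,-\f{1}{2c}\vp^2,\vp')^T]$, i.e.\ the conditions $\dpr{b_1}{\vp}=0$, $\int b_2\,dx=0$, and $-\f{1}{2c}\dpr{b_2}{\vp^2}+\dpr{b_3}{\vp'}=0$. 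Testing the generators of $Ker(\cj\ch)$ against these shows $\Phi_1,\Phi_2$ lie in $Range(\cj\ch)$ (indeed $\Phi_1=\cj\ch\xi_1$, $\Phi_2=\cj\ch\xi_2$) while $g$ does not, precisely because the first condition evaluated on $g$ equals $-\dpr{L^{-1}\vp}{\vp}\neq0$ by \eqref{ass2}. Hence $Range(\cj\ch)\cap Ker(\cj\ch)=span[\Phi_1,\Phi_2]$ and $\dim Ker((\cj\ch)^2)=\dim Ker(\cj\ch)+2=5$. The crux is to show the generalized kernel then stabilizes, i.e.\ $Ker((\cj\ch)^3)=Ker((\cj\ch)^2)$: this amounts to checking that no element of $Ker((\cj\ch)^2)$ outside $span[\Phi_1,\Phi_2]$ lies in $Range(\cj\ch)$, equivalently that the $3\times3$ matrix obtained by pairing the complementary basis $\{\xi_3,\xi_1,\xi_2\}$ against the three cokernel functionals is nonsingular. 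In this matrix the entry $\int(\xi_3)_2\,dx=2T\neq0$ makes the phase chain transparently terminate, and the dominant contribution $\tfrac{1}{2c^3}\norm{\vp}^2\,\dpr{L^{-1}\vp}{\vp}$ is nonzero by \eqref{ass2}; the main obstacle is verifying that the full determinant does not vanish, which is the one place the nondegeneracy \eqref{ass2} (together with $\norm{\vp}^2\neq0$ and $2T\neq0$) is genuinely required. Structurally, this is what forces $g$ to remain an isolated size-one Jordan block, as opposed to the two length-two chains generated by the translational and phase symmetries, and pins $\dim gKer(\cj\ch)=5$ with the stated basis.
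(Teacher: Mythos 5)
Your kernel computation, the verification that $\cj\ch\xi_1=\Phi_1$, $\cj\ch\xi_2=\Phi_2$, $\cj\ch\xi_3=-\Phi_2$, and the identification $Ker(\cj\ch)=span[\Phi_1,\Phi_2,g]$ with $g=\xi_2+\xi_3$ all match the paper (its \eqref{405}). Your completeness argument, however, takes a genuinely different route: the paper never introduces the adjoint cokernel $Ker(\ch\cj)$. It solves the chain equations $\cj\ch\Psi=\Phi_1$, $\cj\ch\Psi=\Phi_2$, $\cj\ch\Psi=g$ one at a time (only the last is obstructed, exactly by \eqref{ass2}, which is the only place that hypothesis enters), and then terminates the chain by showing that $\cj\ch\Psi=\zeta_1\vec{\xi}+\zeta_2\vec{\eta}$ forces $\zeta_1=\zeta_2=0$: the first component $L_2\Psi_3=\f{\zeta_1}{2c(c\be-1)}\vp$ is solvable only if $\zeta_1\dpr{\vp}{\vp}=0$, and then $2\p_x(\vp\Psi_1+\f{c}{2}\Psi_2)=\zeta_2$ is impossible for $\zeta_2\neq0$ by periodicity. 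This uses only scalar Fredholm theory for $L_2$ plus an elementary periodicity observation, whereas your route additionally presumes, without justification, that the non-self-adjoint, non-elliptic operator $\cj\ch$ has closed range, so that $Range(\cj\ch)=Ker(\ch\cj)^{\perp}$.

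The genuine gap is your final step, and you flag it yourself: you reduce $Ker((\cj\ch)^3)=Ker((\cj\ch)^2)$ to the nonsingularity of the $3\times3$ matrix $M$ pairing $\{\xi_1,\xi_2,\xi_3\}$ against the three cokernel functionals, but you never prove $\det M\neq0$, and it does not follow from \eqref{ass2}. Writing $q=\dpr{L^{-1}\vp}{\vp}$, $m=\dpr{\vp}{\vp}$, and using $\dpr{L^{-1}\vp}{\vp^3}=\f{cm}{2(c\be-1)}$, a direct computation gives $\det M=\pm\f{2}{c}\left(cTqD_{11}-\f{Tm^2}{4c(c\be-1)^2}-\f{qm^2}{4c^2}\right)=\pm\f{2}{c}\det(D)$, where $D_{11}=\dpr{L_2^{-1}\vp'}{\vp'}+\f{\be}{2c^2(c\be-1)}\dpr{\vp^2}{\vp^2}$ is exactly the entry \eqref{103} of the matrix $D$ used later for the index count. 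So your criterion secretly requires controlling $D_{11}$, a quantity about which \eqref{ass2} says nothing and whose evaluation is precisely the elliptic-integral work of Section \ref{sec:3}; since there is no small parameter here, calling $\f{qm^2}{2c^3}$ a ``dominant contribution'' is unjustified (note also that this term carries $\norm{\vp}^4$, not $\norm{\vp}^2$), and $\det M$ can a priori vanish even when $q\neq0$. To be fair, your obstacle is not an artifact of your method: it arises because $\cj\ch\Psi$ may carry a component along $g$ inside $Ker(\cj\ch)$, a component that cannot be subtracted off precisely because $g\notin Range(\cj\ch)$; the paper's termination step \eqref{445} silently restricts the image to $span[\vec{\xi},\vec{\eta}]$ and so never confronts this case. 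As written, though, your proof of the proposition is incomplete: you must either justify that restriction, which would reduce your last step to the paper's two elementary observations, or genuinely prove $\det(D)\neq0$, which is a strictly stronger statement than \eqref{ass2}.
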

 \begin{proof}
 	We start with $Ker[\ch]$.  We have that $\begin{pmatrix}
 	f \\ g \\ h
 	\end{pmatrix} \in \ker \mathcal{H}$ if
 	\begin{equation}
 	\label{s1}
 	\left| \begin{array}{ll}
 	L_1f+\varphi g=0\\
 	\varphi f+\frac{c}{2}g=0\\
 	L_2 h=0
 	\end{array} \right.
 	\end{equation}
 	From the second equation of (\ref{s1}), we have $g=-\frac{2}{c}\varphi f$ and plugging in the first equation, we get
 	$$
 	0=L_1f+\varphi g=-\partial_x^2f+\sigma f+\left(3\beta -\frac{1}{c}\right)\varphi^2-\frac{2}{c}\varphi^2f=Lf
 	$$
 	From Proposition \ref{prop:spec},  we get that all solutions are multiples of $f=\varphi'$ and $g=-\frac{2}{c}\varphi \varphi'$. From Proposition \ref{prop:spec}, we know that $Ker(L_2)=span[\vp]$ and so, from
 	third equation of (\ref{s1}), we have that another vector in $Ker(\ch)$ is  $h=\varphi$. This identifies $Ker(\ch)$ for us as the one presented in \eqref{kerh}.
 	
 	We now turn to a representation for $Ker(\cj\ch)$. Consider $Ker(\cj\ch)\ominus Ker(\ch)$. We set the equations for
 	$\begin{pmatrix}
 	f \\ g \\ h
 	\end{pmatrix} \in Ker(\cj\ch)\ominus Ker(\ch)$. We need to solve $\ch \begin{pmatrix}
 	f \\ g \\ h
 	\end{pmatrix} \in Ker(\cj)=span \begin{pmatrix}
 	0 \\ 1 \\ 0
 	\end{pmatrix} $. This is equivalent to $h=0$ and
 	\begin{equation}
 	\label{a:10}
 	\left| \begin{array}{ll}
 	L_1f+\varphi g=0\\
 	\varphi f+\frac{c}{2}g=1
 	\end{array} \right.
 	\end{equation}
 	Solving it, implies in a similar manner
 	$$
 	f=-\f{2}{c} L^{-1} \vp,\ \  g=\f{2}{c} \left(1+\f{2}{c} \vp L^{-1} \vp\right).
 	$$
 	This yields an additional,   third vector in the representation of $Ker(\cj\ch)$. More specifically, we obtain
 	  	\begin{equation}
 	  	\label{405}
 	  	Ker(\cj \ch)=span\left\{\left(\begin{array}{c}
 	  	\varphi' \\
 	  	-\frac{2}{c}\varphi \varphi'\\
 	  	0
 	  	\end{array}\right), \ \ \begin{pmatrix}
 	  	0 \\ 0 \\ \varphi
 	  	\end{pmatrix}, \begin{pmatrix}
 	  	-L^{-1}\vp \\ 1+\f{2}{c} \vp L^{-1} \vp \\ 0
 	  	\end{pmatrix} \right\}.
 	  	\end{equation}
 	We now work on identifying the adjoint/generalized eigenvectors. We start with the next level adjoints e-vectors, namely  $Ker((\cj\ch)^2)$. First, we consider the equation
 	$$
 	\cj\ch \begin{pmatrix}
 	f \\ g\\ h
 	\end{pmatrix}=\left(\begin{array}{c}
 	\varphi' \\
 	-\frac{2}{c}\varphi \varphi'\\
 	0
 	\end{array}\right).
 	$$
 	This has solutions, which are all multiples of
 	\begin{eqnarray*}
 	f &=& \f{1}{c^2} L^{-1} [\vp^3]=\f{1}{2 c(c\be-1)} \vp;  \\
 	g &=& -\f{2}{c^2}\left(\f{\vp^2}{2} + \f{\vp L^{-1}[\vp^3]}{c} \right) = 	- \f{\be}{c(c\be-1)} \vp^2\\
 	h &=&  L_2^{-1} \vp',
 	\end{eqnarray*}
 	where we have used the identity $L\vp=2(\be-\f{1}{c})\vp^3$. This gives a new element
 	$\vec{\xi}\in Ker((\cj\ch)^2)\ominus Ker(\cj\ch)$, namely
 	$$
 	\vec{\xi}:=\left(\begin{array}{c}
 	\f{1}{2 c(c\be-1)} \vp \\ - \f{\be}{c(c\be-1)} \vp^2 \\ L_2^{-1} \vp'
 	\end{array}\right).
 	$$
 		Next, we solve
 		$$
 		\cj\ch \begin{pmatrix}
 		f \\ g\\ h
 		\end{pmatrix}=\begin{pmatrix}
 		0 \\ 0 \\ \varphi
 		\end{pmatrix}.
 		$$
 		We obtain that all  solutions are multiples of the vector
 		\begin{equation}
 		\label{410}
 			f = -L^{-1}\vp, \ \
 			g = \f{2}{c} \vp L^{-1}\vp ,\ \
 			h = 0.
 		\end{equation}
 	We compare this with a similar element,  already present in $Ker(\cj\ch)$.
 	We conclude, that we can consider instead the following new element  $\vec{\eta}\in Ker((\cj\ch)^2)\ominus Ker(\cj\ch)$,
 	$$
 	\vec{\eta}=\left(\begin{array}{c}
 	-L^{-1}\vp  \\ 1+\f{2}{c} \vp L^{-1}\vp \\ 0
 	\end{array}\right)-\left(\begin{array}{c}
 	-L^{-1}\vp  \\ \f{2}{c} \vp L^{-1}\vp \\ 0
 	\end{array}\right)=\left(\begin{array}{c}
 	0 \\ 1 \\ 0
 	\end{array}\right).
 	$$
 	Finally, we solve the equation for the third eigenvector, with unknown $\Psi=\left(\begin{array}{c} \Psi_1 \\ \Psi_2 \\ \Psi_3  \end{array}\right)$
 	\begin{equation}
 	\label{16}
 	\cj\ch \Psi=  \begin{pmatrix}
 	-L^{-1}\vp \\ 1+\f{2}{c} \vp L^{-1} \vp \\ 0
 	\end{pmatrix}.
 	\end{equation}
 	Taking into account that $\cj\ch \Psi=\left(\begin{array}{c} L_2\Psi_3 \\ * \\ *  \end{array}\right)$.
 	This necessitates the  solvability condition $L^{-1}\vp\perp Ker[L_2]=span[\vp]$. This means that as long as $\dpr{L^{-1} \vp}{\vp}\neq 0$, there are no further elements of
 	$Ker((\cj\ch)^2)\ominus Ker(\cj\ch)$.
 	All in all, we have established that
 	\begin{equation}
 	\label{420}
 		Ker((\cj\ch)^2)\ominus Ker(\cj\ch)=span[\vec{\xi}, \vec{\eta}].
 	\end{equation}
 Next, we show that
 \begin{equation}
 \label{430}
 Ker((\cj\ch)^3)\ominus Ker((\cj\ch)^2)=\{0\}.
 \end{equation}
 	Note that combining \eqref{430} and \eqref{420} with \eqref{405}, yields the formula \eqref{402}. So, it remains to show \eqref{430}. To this end, we need to show that the equation
 	\begin{equation}
 	\label{445}
 	\zeta_1 \vec{\xi}+\zeta_2 \vec{\eta}=\cj \ch \Psi= \left(\begin{array}{c}
 	L_2 \Psi_3 \\ 2\p_x(\vp \Psi_1+\f{c}{2} \Psi_2) \\ *
 	\end{array}\right)
 	\end{equation}
 	has no solutions if $(\zeta_1, \zeta_2)\neq (0,0)$. Note that
 	 the first equation in \eqref{445} reads  $L_2 \Psi_3=\f{\zeta_1}{2c(c\be-1)}\vp$. As $Ker(L_2)=span[\vp]$, this forces a solvability condition, $\dpr{\vp}{\f{\zeta_1}{2c(c\be-1)}\vp}=0$, which is impossible, unless $\zeta_1=0$.
 	 Now that we know that $\zeta_1=0$, the second equation in \eqref{445} reads
 	$$
 	2\p_x(\vp \Psi_1+\f{c}{2}\Psi_2)=\zeta_2.
 	$$
 This implies $\vp \Psi_1+\f{c}{2}\Psi_2	= \f{\zeta_2}{2} x+const$. The  left hand side of this identity is   $2T$ periodic, while the right-hand side  is never $2T$ periodic, unless $\zeta_2=0$. Thus, we conclude that $\zeta_2=0$ as well, which establishes \eqref{430}.

 This completes the proof of Proposition \ref{prop1}.
 \end{proof}
  Next, we compute the Morse index of $\ch$.
 \subsection{Morse index of $\ch$}
 In the next Proposition we compute the Morse index of $\ch$.
 \begin{proposition}
 	\label{prop:12}
 	We have the following formula for the Morse index $n(\ch)$,
 	\begin{itemize}
 		\item If  $\vp$ is the dnoidal wave given by \eqref{2.4}, then $n(\ch)=1$.
 		\item For the snoidal case, i.e. $\vp$  is given by \eqref{2.8}, we have  $n(\ch)=3$.
 	\end{itemize}
 	
 \end{proposition}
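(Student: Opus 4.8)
The plan is to exploit the block structure of $\ch$. Writing a generic vector as $\vec{z}=(P,V,Q)^{\top}$, the component $Q$ is completely decoupled from $(P,V)$, so $\ch$ splits as the orthogonal direct sum of the scalar Hill operator $L_2$ acting on $Q$ and the $2\times 2$ matrix operator
$$
M:=\begin{pmatrix} L_1 & \vp \\ \vp & \f{c}{2}\end{pmatrix}
$$
acting on $(P,V)$. Consequently $n(\ch)=n(M)+n(L_2)$, and Proposition \ref{prop:spec} already records $n(L_2)=0$ in the dnoidal case and $n(L_2)=2$ in the snoidal case. Thus the whole computation reduces to proving that $n(M)=1$ in both regimes, after which the two claimed values $n(\ch)=1$ and $n(\ch)=3$ follow by addition.

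To evaluate $n(M)$ I would diagonalize its quadratic form by completing the square in the $V$-slot, which is legitimate precisely because the $(2,2)$-entry $\f{c}{2}$ is an invertible scalar multiplication ($c\neq 0$). Explicitly, the bounded invertible substitution $(P,V)\mapsto (P,W)$ with $W:=V+\f{2}{c}\vp P$ converts
$$
\dpr{M(P,V)}{(P,V)}=\dpr{L_1 P}{P}+2\dpr{\vp P}{V}+\f{c}{2}\norm{V}^2
$$
into the decoupled expression $\dpr{L P}{P}+\f{c}{2}\norm{W}^2$, where
$$
L=L_1-\f{2}{c}\vp^2=-\p_x^2+\si+3\Big(\be-\f{1}{c}\Big)\vp^2
$$
is exactly the auxiliary Hill operator of Proposition \ref{prop:spec}. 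Since this substitution is a congruence realized by a bounded invertible map on the form domain $H^1\times L^2$ (here $\vp$ is smooth and bounded, so both the map and its inverse are manifestly bounded), Sylvester's law of inertia gives that the Morse index of $M$ equals that of the diagonal form $\dpr{L P}{P}+\f{c}{2}\norm{W}^2$. As the definite $W$-block contributes no negative directions, one is left with $n(M)=n(L)=1$ by Proposition \ref{prop:spec}. Combining with the first step yields $n(\ch)=1$ in the dnoidal case and $n(\ch)=3$ in the snoidal case.

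The main obstacle I anticipate is the careful treatment of the $V$-block rather than the algebra itself. Two points must be nailed down: first, that the congruence $(P,V)\mapsto(P,W)$ is genuinely bounded with bounded inverse on the indicated form domain, so that the number of negative squares of the quadratic form is preserved; and second, more delicately, that the scalar $\f{c}{2}\norm{W}^2$ term truly contributes nothing to the count, which is the step where the sign of $c$ enters decisively and must be controlled, since an indefinite complementary term would otherwise affect the finiteness and the exact value of the Morse index. Once the Schur-complement reduction $n(M)=n(L)$ is secured, the two values $n(\ch)=1$ and $n(\ch)=3$ are immediate from the spectral pictures of $L$ and $L_2$ already established in Proposition \ref{prop:spec}.
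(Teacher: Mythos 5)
Your proof is correct and takes essentially the same approach as the paper: the paper likewise splits $n(\ch)=n(\ch_0)+n(L_2)$ with $\ch_0=\left(\begin{smallmatrix} L_1 & \vp \\ \vp & c/2\end{smallmatrix}\right)$, completes the square to rewrite the quadratic form as $\dpr{Lf}{f}+\int_{-T}^{T}\left(\sqrt{2/c}\,\vp f+\sqrt{c/2}\,g\right)^2dx$, and concludes $n(\ch_0)=1$, the only difference being bookkeeping (you invoke Sylvester's inertia under a bounded congruence, while the paper uses the max-min principle to bound the second eigenvalue from below). The caveat you flag about the sign of $c$ is genuine but is shared by the paper, whose expression $\sqrt{c/2}$ tacitly assumes $c>0$; for $c<0$ the form is negative on the infinite-dimensional subspace $\{(0,g)\}$, so $n(\ch_0)$ would be infinite and the stated proposition could not hold.
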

 \begin{proof}
 	Denote $\ch_0:=\left( \begin{array}{ll} L_1&\varphi \\ \varphi & \frac{c}{2} \end{array} \right)$. Clearly, $n(\ch)=n(\ch_0)+n(L_2)$. Taking into account the computation of $n(L_2)$  in Proposition \ref{prop:spec} (which yields $n(L_2)=0$ in the dnoidal case and $n(L_2)=2$ in the snoidal case), it remains to show that $n(\ch_0)=1$, in both cases under consideration. 	
 	
 	To this end, observe that we have the following expression for the quadratic form associated to $\ch_0$,
 	\begin{equation}\label{s2}
 	\begin{array}{ll}
 	\langle \ch_0 \left(\begin{array}{c} f \\ g \end{array} \right), \left(\begin{array}{c} f \\ g \end{array} \right)\rangle &=\langle L_1f,f\rangle+2\langle \varphi f,g\rangle+\frac{c}{2}\langle g,g\rangle\\
 	&= \langle Lf,f\rangle +\int_{-T}^{T}{\left[ \sqrt{\frac{2}{c}}f+\sqrt{\frac{c}{2}}g\right]^2}dx.
 	\end{array}
 	\end{equation}
 	First, we confirm  that $\ch_0$ has  at least one  negative eigenvalue. Recall from Proposition \ref{prop:spec}, that  $n(L)=1$.  Let us denote by $h$ the eigenfunction of $L$ corresponding to the negative eigenvalue. For $f=h$ and $g:=-\frac{2}{c}h$ in (\ref{s2}), we get
 	$$
 \langle \ch_0 \left(\begin{array}{c} f \\ g \end{array} \right), \left(\begin{array}{c} f \\ g \end{array} \right)\rangle  = \langle Lh,h\rangle <0.
 	$$
 	Hence $\ch_0$ has a negative eigenvalue. Thus, selecting $f\perp h$ and using the max-min characterization of eigenvalues, we have that the second smallest eigenvalue $\la_1$ satisfies the estimate
 	$$
 	\lambda_1(\ch_0) \geq  \inf_{(f,g)\perp (h,0):\|f\|^2+\|g\|^2=1}
 	\langle \ch_0 \left(\begin{array}{c} f \\ g \end{array} \right), \left(\begin{array}{c} f \\ g \end{array} \right)\rangle\geq \inf_{f\perp h, \|f\|\leq 1} \dpr{Lf}{f}\geq 0,
 	$$
 	since $L$ has $n(L)=1$ and so, $\inf_{f\perp h} \dpr{Lf}{f}\geq 0$.
 	That is, $n(\ch_0)=1$.
 \end{proof}
 \section{Stability analysis of the waves}
 \label{sec:3}
 We start by analyzing the stability of the dnoidal waves. Our starting point is the the instability Krein index count \eqref{ih2}. Thus, it remains to determine the Morse index of the matrix $D$ associated with  it.
 Recall that, under the assumption \eqref{ass2}, we have identified
 $$
\vec{\psi}_1=\begin{pmatrix}
\f{1}{2c(c\be-1)} \vp \\
- \f{\be}{c(c\be-1)} \vp^2 \\
\\ L_2^{-1} \vp'.
\end{pmatrix}; \  \vec{\psi}_2= \begin{pmatrix}
-L^{-1}\vp\\
\f{2}{c} \vp L^{-1}\vp\\
0
\end{pmatrix}; \ \vec{\psi}_3=\begin{pmatrix}
0 \\ 1 \\ 0
\end{pmatrix}.
 $$
  so that 	$gKer(\cj\ch)\ominus Ker(\ch)=span[\vec{\psi}_1, \vec{\psi}_2, \vec{\psi}_3]$. By direct computations, we have
 $$\mathcal{H}\vec{\psi}_1=\left( \begin{array}{cc} 0 \\ -\frac{1}{2c} \vp^2 \\ \vp' \end{array} \right); \ \ \mathcal{H}\vec{\psi}_2=\left( \begin{array}{cc} -\vp \\ 0 \\ 0\end{array} \right); \ \  \ch \vec{\psi}_3=\begin{pmatrix}
 \vp \\ \f{c}{2} \\ 0
 \end{pmatrix}
 $$
 and
 \begin{eqnarray}
 \label{103}
 D_{11} &=&\langle\mathcal{H}\psi_1, \psi_1\rangle = \langle L_2^{-1}\vp',\vp'\rangle+\frac{\beta}{2c^2(c\beta-1)}\langle \vp^2,\vp^2\rangle\\
 \label{104}
 D_{12}  = D_{21}  &=&  \langle\mathcal{H}\vec{\psi}_1,\vec{\psi}_2\rangle=-\frac{1}{2c(c\beta-1)}\langle \vp, \vp\rangle\\
 \label{105}
 D_{22} &=&\langle\mathcal{H}\vec{\psi}_2, \vec{\psi}_2\rangle=\langle L^{-1}\vp, \vp\rangle, D_{3 3} = \dpr{\ch \vec{\psi}_3}{\vec{\psi_3}}= c T\\
\label{107}
 D_{1 3}=D_{3 1} &=& \dpr{\ch \vec{\psi}_1}{\vec{\psi}_3} =  -\f{1}{2c} \dpr{\vp}{\vp} \\
 \label{108}
 D_{2 3} = D_{3 2} &=&   \dpr{\ch \vec{\psi}_2}{\vec{\psi}_3}=0.
\end{eqnarray}
 \subsection{Dnoidal waves}

 According to instability index count formula \eqref{ih2} and
 Proposition \ref{prop:12}, which  implies   that $n(\ch)=1$, the stability analysis reduces to  establishing  that $n(D)=1$. Indeed, in such a case, the right-hand side of \eqref{ih2} is zero, thus would rule out all potential instabilities on the left-hand side.

 We proceed to evaluating the elements of the matrix $D$. In fact, we shall need to only compute $D_{22}=\dpr{L^{-1} \vp}{\vp}$, which we will now show is negative.
 To this end, start with the identity $L\varphi'=0$.  In order to construct the Green's function for the operator $L$, we need a solution  $\psi: L \psi=0$. In principle, the following function provides such a solution
  \begin{equation}
 	\label{m:14} 
 	 \psi(x)=\varphi'(x)\int^{x}{\frac{1}{\varphi'^2(s)}}ds, \; \; \left| \begin{array}{cc} \varphi'& \psi \\ \varphi'' & \psi'\end{array}\right|=1
\end{equation}
 Unfortunately, as $\vp'$ has zeros in the interval of integration, this integral is  not well-defined. Instead, we use the standard roundabout way of making the definition of such integral well-defined, which involves integration by parts.   
Specifically,  we proceed by  using the identities
 $$
 \frac{1}{cn^2(y,\kappa)}=\frac{1}{dn(y, \kappa)}\frac{\partial}{\partial_y}\frac{sn(x, \kappa)}{cn(y, \kappa)}, \; \;
 \frac{1}{sn^2(y,\kappa)}=-\frac{1}{dn(y, \kappa)}\frac{\partial}{\partial_y}\frac{cn(x, \kappa)}{sn(y, \kappa)}
 $$
 Integrating by parts yields the alternative, well-defined expression for $\psi$, which is formally equivalent to \eqref{m:14}, 
 \begin{equation}
 	\label{m:15} 
 \psi(x) =\frac{1}{\alpha^2\kappa^2\varphi_0}\left[\frac{1-2sn^2(\alpha x, \kappa )}{dn (\alpha x, \kappa) }
 - \alpha \kappa^2sn(\alpha x, \kappa)cn(\alpha x, \kappa)  \int_{0}^{x}{\frac{1-2sn^2(\alpha s, \kappa )}{dn^2(\alpha s, \kappa)}}ds\right].
\end{equation}
 Thus, we may construct the Green's function as follows
 $$
 L^{-1}f=\varphi'\int_{0}^{x}{\psi(s)f(s)}ds-\psi(s)\int_{0}^{x}{\varphi'(s)f(s)}s+C_f\psi(x),
 $$
 where $C_f$ is chosen, so that $L^{-1}f$ has the same  period   as $\varphi$. After integrating by parts, we get
 \begin{equation}\label{L1}
 \langle L^{-1}\varphi , \varphi\rangle = -\langle \varphi^3, \psi \rangle +\frac{\varphi^2(T)+\varphi(0)^2}{2}\langle \varphi, \psi \rangle +C_{\varphi}\langle \varphi , \psi \rangle .
 \end{equation}
 Integrating by parts yields
 $$\langle \psi'', \varphi\rangle=2\psi'(T)\varphi(T)+\langle \psi, \varphi''\rangle.$$
 Using that $L\varphi=2(\beta-\frac{1}{c})\varphi^3$, we get
 $$
 \langle \psi, \varphi^3\rangle=\frac{c}{c\beta-1}\psi'(T)\varphi(T).
 $$
 Using that $\int_{0}^{K(\kappa)}{\frac{1-2 sn^2(x)}{dn^2(x)}}dx=\frac{1}{\kappa^2(1-\kappa^2)}[2(1-\kappa^2)K(\kappa)-(2-\kappa^2)E(\kappa)]$, we get
 \begin{equation}\label{L2}
 \begin{array}{ll}
 \langle \varphi, \psi \rangle =\frac{1}{\alpha^3 \kappa^2}[E(\kappa)-K(\kappa)] \\
 \\
 \langle \varphi^3, \psi \rangle =\frac{1}{\alpha}\frac{c}{c\beta-1} [2(1-\kappa^2)K(\kappa)-(2-\kappa^2)E(\kappa)] \\
 \\
 C_{\varphi}=-\frac{\varphi''(T)}{2\psi '(T)}\langle \varphi, \psi \rangle+\frac{\varphi^2(T)-\varphi^2(0)}{2}.
 \end{array}
 \end{equation}
 \begin{figure}
 	\centering
 	\includegraphics[width=0.7\linewidth]{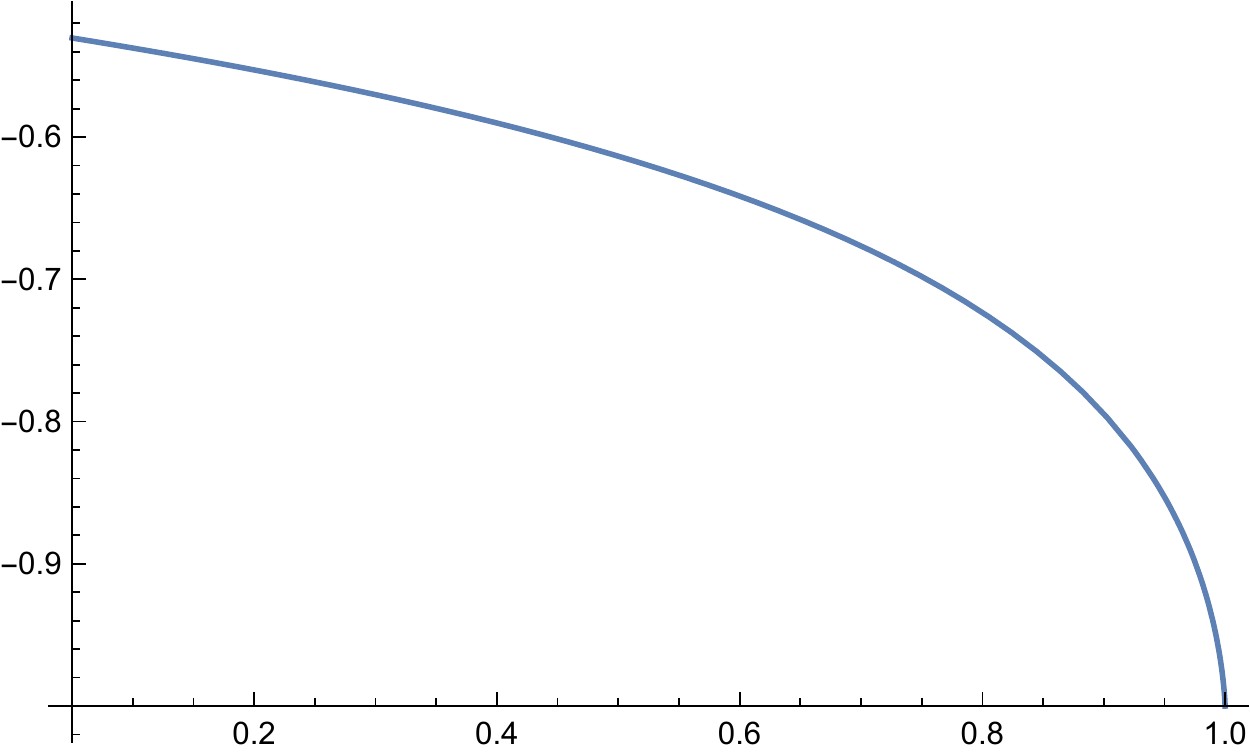}
 	\caption{Graph of 	$\ka\to \frac{E^2(\kappa)-(1-\kappa^2)K^2(\kappa)}{2(1-\kappa^2)K(\kappa)-(2-\kappa^2)E(\kappa)}$}
 	\label{fig:pic1}
 \end{figure}
 Taking into account $\frac{\varphi_0^2}{\alpha^2}=\frac{2c}{1-c\beta}$, we get
 \begin{equation}
 \label{L:12}
 D_{22}=\langle L^{-1} \varphi , \varphi \rangle =\frac{1}{\alpha}\frac{1}{\frac{1}{c}-\beta}\frac{E^2(\kappa)-(1-\kappa^2)K^2(\kappa)}{2(1-\kappa^2)K(\kappa)-(2-\kappa^2)E(\kappa)}<0,
 \end{equation}
 see Figure \ref{fig:pic1}, so in particular, the condition \eqref{ass2} is satisfied. Also, since $D_{22}<0$ for all values of the parameters, it is clear that $D_{2 2}=\dpr{D e_2}{e_2}\leq \inf_{\xi\in \rthree:\|\xi\|=1} \dpr{ D\xi}{\xi}$, whence\footnote{By the way,  by \eqref{ih2} this actually implies that $n(D)=1$. }  $n(D)\geq 1$. As discussed, this implies that  the dnoidal waves are spectrally stable.
 	
 \subsection{Snoidal waves}
 According to the formula \eqref{103}, \eqref{104}, \eqref{105} and \eqref{107}, we shall need to compute
 $\dpr{L_2^{-1} \vp'}{\vp'}$, $\dpr{L^{-1}\vp}{\vp}$ and $\int \vp^2, \int \vp^4$.

 To this end,  we start with the computation of   $\langle L^{-1}\vp,\vp\rangle$.
 We have $L\vp'=0$ and $L\psi=0$, where $\psi(x)=\vp'(x)\int^x{\frac{1}{\vp'^2(s)}}ds$. Using that
 $$
 \frac{1}{cn^2(\alpha x)}=\frac{1}{\alpha dn(\alpha x)}\frac{\partial}{\partial x}\frac{sn(\alpha x)}{cn(\alpha x)},
 $$
 we get the odd function $\psi$
 $$
 \psi (x)=\frac{1}{\vp_0\alpha^2(1-\kappa^2)}\left[ sn(\alpha x)-\alpha \kappa^2cn(\alpha x)dn(\alpha x)\int_{0}^{x}{\frac{1+sn^2(\alpha s)}{dn^2(\alpha s)}}ds\right].
 $$
 Integration by parts yields the formulas
 \begin{eqnarray*}
 & & \langle L^{-1}\vp,\vp\rangle=-\langle \vp^3,\psi\rangle+C_{\vp}\langle \vp, \psi\rangle, \\
 & & \langle \psi'',\vp\rangle=-2\vp'(T)\psi(T)+\langle \psi,\vp''\rangle.
 \end{eqnarray*}
 A direct calculation shows that
 $L\vp=2\left(\beta-\frac{1}{c}\right)\vp^3$, whence
 $$
 \langle \vp^3,\psi\rangle =-\frac{c}{c\beta-1}\vp'(T)\psi(T).
 $$
 Now, we have the relations
 $$
 \left\{ \begin{array}{ll}
 \psi(T)=\frac{\kappa^2}{\vp_0\alpha^2(1-\kappa^2)}\int_{0}^{2K(\kappa)}{\frac{1+sn^2(x)}{dn^2(x)}}dx\\
 \\
 \vp'(T)=-\vp_0\alpha, \ \
 C_{\vp}=-\frac{\vp'(T)}{2\psi(T)}\langle \vp , \psi\rangle. \\
 \\
 \alpha^2=-\frac{\sigma}{1+\kappa^2}, \ \
 \vp_0^2=\frac{2c\sigma \kappa^2}{(1-c\beta)(1+\kappa^2)}
 \end{array} \right.
 $$
 Integration by parts allows us to compute
 $$
 \langle \vp,\psi\rangle=\frac{1}{\alpha^3(1-\kappa^2)}\left[ \int_{0}^{2K(\kappa)}{sn^2(x)}dx+\int_{0}^{2K(\kappa)}{\frac{1+sn^2(x)}{dn^2(x)}}dx-2K(\kappa)\right].
 $$
 Putting all this together, we have
 $$\left\{ \begin{array}{ll}
 \langle \vp^3, \psi\rangle=\frac{1}{\alpha}\frac{c}{c\beta-1}\frac{\kappa^2}{1-\kappa^2}\int_{0}^{2K(\kappa)}{\frac{1+sn^2(x)}{dn^2(x)}}dx\\
 \\C_{\vp}\langle \vp,\psi\rangle=\frac{1}{\alpha}\frac{c}{c\beta-1}\frac{1}{(1-\kappa^2)\int_{0}^{2K(\kappa)}{\frac{1+sn^2(x)}{dn^2(x)}}dx}
 \left[ \int_{0}^{2K(\kappa)}{sn^2(x)}dx+\int_{0}^{2K(\kappa)}{\frac{1+sn^2(x)}{dn^2(x)}}dx-2K(\kappa)\right]^2
 \end{array} \right.
 $$
whence finally
 \begin{equation}
 \label{115}
  \langle L^{-1}\vp,\vp\rangle=\frac{1}{\alpha}\frac{1}{(\beta-\f{1}{c})}F(\ka),
 \end{equation}
 where
 \begin{eqnarray*}
 F(\ka) &=&\left[
 \frac{\left(\int_{0}^{2K(\kappa)}{sn^2(x)}dx+\int_{0}^{2K(\kappa)}{\frac{1+sn^2(x)}{dn^2(x)}}dx-2K(\kappa)\right)^2}
 {(1-k^2)\int_{0}^{2K(\kappa)}{\frac{1+sn^2(x)}{dn^2(x)}}dx}-\f{\kappa^2}{1-\ka^2)} \int_{0}^{2K(\kappa)}{\frac{1+sn^2(x)}{dn^2(x)}}dx\right]\\
 &=& 2 K(\ka)+2 E(\ka) \left(-1+\f{\ka^2 E(\ka)}{(\ka^2+1) E(\ka) - (1-\ka^2) K(\ka)} \right).
 \end{eqnarray*}
  We have plotted it, using \textsc{Mathematica},   see Figure \ref{pic2}. From this,
 it becomes clear that \\ $\dpr{L^{-1} \vp}{\vp}>0$. In particular, the condition \eqref{ass2} holds, whence the conclusions of Proposition \ref{prop1} hold.
 \begin{figure}
 	\centering
 	\includegraphics[width=0.7\linewidth]{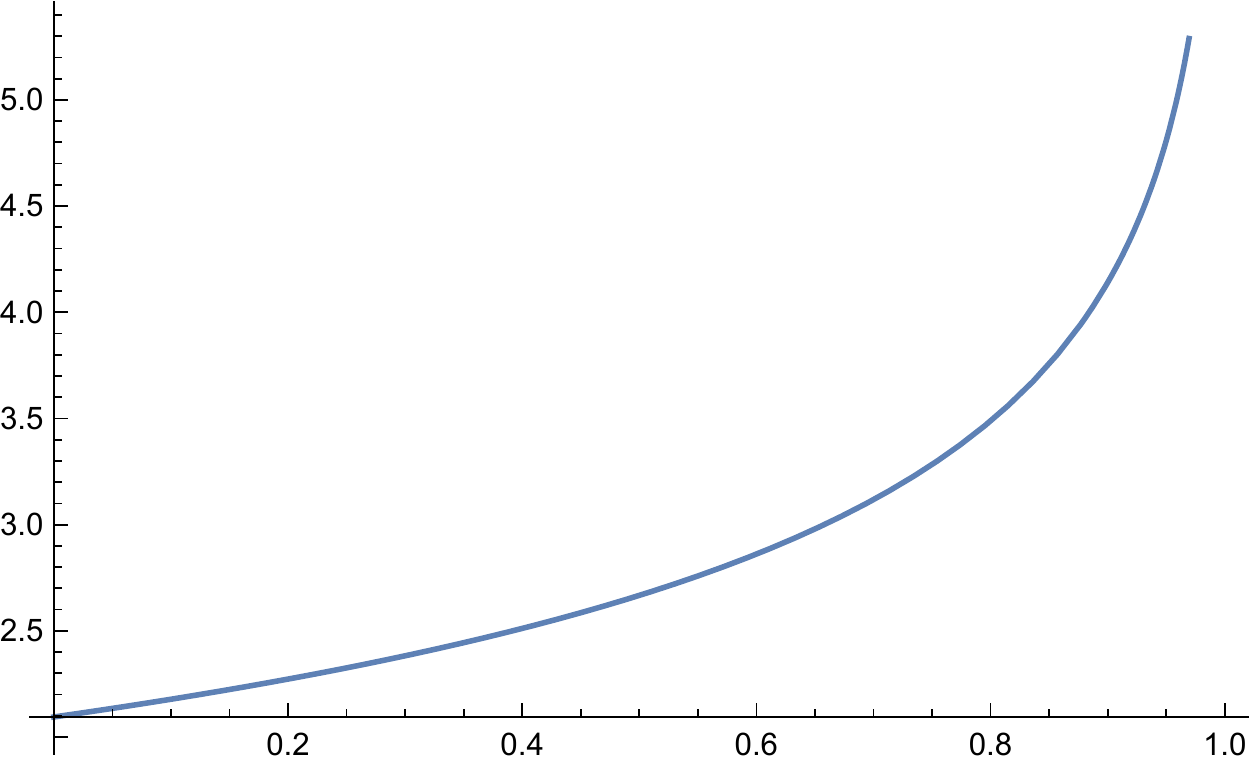}
 	\caption{Graph of 	$F(\ka)$}
 	\label{pic2}
 \end{figure}

 We will now compute
 $\langle L_2^{-1}\vp',\vp'\rangle$.
 We have $L_2\varphi=0$ and $\psi=\varphi \int^{x}{\frac{1}{\varphi^2}}ds$ is also solution of $L_2\psi=0$. Using the identity
 $$
 \frac{1}{sn^2(y,\kappa)}=-\frac{1}{\alpha dn(y, \kappa)}\frac{\partial}{\partial_y}\frac{cn(x, \kappa)}{sn(y, \kappa)}$$
 and integration by parts,  we can alternatively express $\psi$ as  follows
 $$
 \psi(x) =-\frac{1}{\alpha \vp_0}\left[\frac{cn(\alpha x)}{dn(\alpha x)}
 - \alpha \kappa^2sn(\alpha x, \kappa) \int_{0}^{x}{\frac{cn^2(\alpha s, \kappa )}{dn^2(\alpha s, \kappa)}}ds\right].
 $$
 Using that $\varphi$ is odd function and $\psi$ is even function, we get
 \begin{eqnarray*}
 \langle L_2^{-1}\varphi', \varphi'\rangle &=& -\int_{-T}^{T}{\varphi^2\varphi'\psi}dx+C_{\varphi'}\int_{-T}^{T}{\varphi'\psi}dx, \\
C_{\varphi'} &=& -\frac{\varphi'(T)}{2\psi'(T)}\int_{-T}^{T}{\varphi'\psi}dx.
 \end{eqnarray*}
 Hence
 $$
 \langle L_2^{-1}\varphi', \varphi'\rangle=-\int_{-T}^{T}{\varphi^2\varphi'\psi}dx-\frac{\varphi'(T)}{2\psi'(T)}\left( \int_{-T}^{T}{\varphi'\psi}dx \right)^2.
 $$
 In addition, we have
 $$\left\{ \begin{array}{ll}
 \vp'(T)=-\alpha \vp_0, \ \
 \psi'(T)=-\frac{\kappa^2}{\vp_0}\int_{0}^{2K(\kappa)}{\frac{cn^2x}{dn^2x}}dx\\
 \\
 \int_{-T}^{T}{\vp'(x)\psi(x)}dx=-\frac{1}{\alpha}\left[ \int_{0}^{2K(\kappa)}{cn^2(x)}dx+\int_{0}^{2K(\kappa)}{\frac{cn^2(x)}{dn^2(x)}}dx\right]\\
 \\
 \int_{-T}^{T}{\vp^2\vp'(x)\psi(x)}dx=-\frac{\vp_0^2}{\alpha}\left[ 2\int_{0}^{2K(\kappa)}{sn^2(x)cn^2(x)}dx+\frac{\kappa^2}{2}\int_{0}^{2K(\kappa)}{\frac{sn^4(x)cn^2(x)}{dn^2(x)}}dx\right]\\
 \\
 \alpha^2=-\frac{\sigma}{1+\kappa^2}, \ \
 \vp_0^2=\frac{2(-\sigma) \kappa^2}{(\beta-\f{1}{c})(1+\kappa^2)}
 \end{array} \right.
 $$
 Putting all this together, we get
 \begin{eqnarray*}
  \langle L_2^{-1}\varphi', \varphi'\rangle &=& \frac{\vp_0^2}{\alpha}\left[ 2\int_{0}^{2K(\kappa)}{sn^2(x)cn^2(x)}dx+\frac{\kappa^2}{2}\int_{0}^{2K(\kappa)}{\frac{sn^4(x)cn^2(x)}{dn^2(x)}}dx \right]-\\
  &-& \frac{\vp_0^2}{\alpha} \frac{\left( \int_{0}^{2K(\kappa)}{cn^2(x)}dx+\int_{0}^{2K(\kappa)}{\frac{cn^2(x)}{dn^2(x)}}dx\right)^2}
  {2\kappa^2\int_{0}^{2K(\kappa)}{\frac{cn^2x}{dn^2x}}dx}
 \end{eqnarray*}
 Finally,  we have
 \begin{eqnarray*}
 \dpr{\vp}{\vp}  &=& \frac{2\vp_0^2}{\alpha}\int_{0}^{2K(\kappa)}{sn^2(x)}dx,\\
\dpr{\vp^2}{\vp^2} &=& \frac{2\vp_0^4}{\alpha}\int_{0}^{2K(\kappa)}{sn^4(x)}dx.
 \end{eqnarray*}
We now compute $\det(D)$, in the regime $\be=\f{1}{c}+\eps, 0<\eps<<1$.  We will establish the following proposition, regarding the matrix $D$,  introduced in \eqref{103}, \eqref{104}, \eqref{105}, \eqref{107}, \eqref{108}.
\begin{proposition}
	\label{prop:18}
	Fix $c\neq 0,  \si<0$. Then, there exists $\eps_0=\eps_0(c,\si)>0$, so that for all $0<\eps<\eps_0$ and $\be=\f{1}{c}+\eps$, we have that $det(D)>0$.
\end{proposition}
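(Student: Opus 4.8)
The plan is to substitute $\be=\f{1}{c}+\eps$ directly into the entries \eqref{103}--\eqref{108} and to read off the exact power of $\eps$ governing each one. First I would record the algebraic consequences of the substitution: $c\be-1=c\eps$ and $\be-\f{1}{c}=\eps$, while \eqref{302} gives $\vp_0^2=\f{2(-\si)\ka^2}{\eps(1+\ka^2)}$, so that the amplitude blows up like $\eps^{-1}$ with $\al$ held fixed. Feeding this into the closed forms computed just above the proposition yields the scalings $\dpr{\vp}{\vp}\sim\eps^{-1}$, $\dpr{\vp^2}{\vp^2}\sim\eps^{-2}$, $\dpr{L^{-1}\vp}{\vp}=\f{F(\ka)}{\al\eps}\sim\eps^{-1}$ and $\dpr{L_2^{-1}\vp'}{\vp'}\sim\eps^{-1}$. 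Through \eqref{103}--\eqref{108} this forces the orders $D_{11}\sim\eps^{-3}$, whose leading part is $\f{\be}{2c^2(c\be-1)}\dpr{\vp^2}{\vp^2}$ (the piece $\dpr{L_2^{-1}\vp'}{\vp'}$ being only $\eps^{-1}$), together with $D_{12}\sim\eps^{-2}$, $D_{22}\sim\eps^{-1}$, $D_{13}\sim\eps^{-1}$, $D_{33}=cT=O(1)$ and $D_{23}=0$.

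Using $D_{23}=D_{32}=0$, I would expand the determinant as
\begin{equation*}
\det D = D_{33}(D_{11}D_{22}-D_{12}^2)-D_{22}D_{13}^2 ,
\end{equation*}
and compare orders: both $D_{11}D_{22}$ and $D_{12}^2$ are exactly of order $\eps^{-4}$, whereas $D_{22}D_{13}^2$ is only $\eps^{-3}$. Hence, for small $\eps$, the sign of $\det D$ is decided by the $\eps^{-4}$ coefficient of the leading $2\times 2$ minor, provided that coefficient is nonzero. Setting $C_0=\f{2(-\si)\ka^2}{1+\ka^2}$, $I_2=\int_0^{2K(\ka)}sn^2(x)\,dx$ and $I_4=\int_0^{2K(\ka)}sn^4(x)\,dx$, substituting the explicit evaluations collapses the minor to
\begin{equation*}
D_{11}D_{22}-D_{12}^2=\f{C_0^2}{c^4\al^2\eps^4}(I_4\,F(\ka)-I_2^2)+O(\eps^{-3}).
\end{equation*}
The entire computation is thereby reduced to the sign of the purely $\ka$-dependent number $I_4\,F(\ka)-I_2^2$: once this is shown to be positive, the leading minor is positive, and $\det D$ inherits to leading order the sign of $D_{33}=cT$, which is nonzero for the periodic waves by \eqref{306} and positive for the waves under consideration, which is the asserted $\det D>0$.

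The main obstacle is exactly this positivity, $I_4\,F(\ka)-I_2^2>0$ for all $\ka\in(0,1)$. I would reduce it to a single relation between $K(\ka)$ and $E(\ka)$ by inserting the classical values $I_2=\f{2(K-E)}{\ka^2}$ and $I_4=\f{2}{3\ka^4}((2+\ka^2)K-2(1+\ka^2)E)$, together with the simplified multiplier $F(\ka)=\f{2(2KE-(1-\ka^2)K^2-E^2)}{(1+\ka^2)E-(1-\ka^2)K}$ read off from \eqref{115}. The delicate feature is that the difference degenerates as $\ka\to0$: a Taylor expansion there gives $I_4\,F(\ka)-I_2^2=\f{\pi^2}{96}\ka^2+O(\ka^4)$, so no crude lower bound will do. I would therefore settle the two endpoints analytically — the small-$\ka$ expansion just noted, and the asymptotics $K(\ka)\to\infty$, $E(\ka)\to1$ near $\ka=1$, which yield $I_4\,F(\ka)-I_2^2\sim\f{2}{3}K(\ka)\to+\infty$ — and rule out interior sign changes with a \textsc{Mathematica} plot of $I_4\,F(\ka)-I_2^2$ on $(0,1)$, in the spirit of Figure \ref{pic2}. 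Granting this positivity, the subleading contributions to $\det D$ are of strictly lower order $\eps^{-3}$, so there is $\eps_0=\eps_0(c,\si)>0$ below which $\det D$ retains its leading sign, which completes the proof.
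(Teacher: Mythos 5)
Your proposal reproduces the paper's own proof essentially step for step: the same $\eps$-scalings of the entries \eqref{103}--\eqref{108} after setting $\be=\frac{1}{c}+\eps$, the same expansion $\det D = D_{33}(D_{11}D_{22}-D_{12}^2)-D_{22}D_{13}^2$ exploiting $D_{23}=0$, the same identification of the leading $\eps^{-4}$ coefficient as a positive multiple of $H(\ka)=I_4\,F(\ka)-I_2^2$, and the same reliance on a \textsc{Mathematica} plot to conclude $H(\ka)>0$ (the paper's Figure \ref{pic3}). Your additions are refinements rather than a different route: the analytic endpoint checks $H(\ka)=\frac{\pi^2}{96}\ka^2+O(\ka^4)$ as $\ka\to 0$ and $H(\ka)\sim\frac{2}{3}K(\ka)$ as $\ka\to 1$ (both of which I verified) strengthen the plot-based step, and your explicit remark that the leading sign of $\det D$ is that of $D_{33}=cT$ makes visible a point the paper silently absorbs into its constant $C(k,\si,c)$ — note that your claim that $cT>0$ ``for the waves under consideration,'' equivalent to assuming $c>0$, is exactly what the paper's own conclusion $\det D>0$ also implicitly requires, since nothing in the hypotheses of the proposition excludes $c<0$.
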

Before we proceed with the proof of Proposition \ref{prop:18}, let us finish the proof of Theorem \ref{theo:20}. That is, we show that the snoidal waves are spectrally unstable.

We argue  as follows - for very small $\eps$, we have from Proposition \ref{prop:18} that $\det(D)>0$, whence the symmetric matrix $D$ has  either two negative eigenvalues and a positive one (in which case $n(D)=2$), or three positive eigenvalues or $n(D)=0$.

By \eqref{ih2}, we conclude that either $k_{Ham}=3-2=1$ or $k_{Ham.}=n(\cl)-n(D)=3-0=3$.
This implies that there is at least one real instability. In fact, for systems with $k_{Ham}=1$, this is obvious. If $k_{Ham.}=3$, the possibilities are as follows - three real instabilities, one real instability and two complex/oscillatory instabilities and one real instability and a pair of purely imaginary eigenvalues of negative Krein signature. Unfortunately, the instability index theory outlined in Section \ref{sec:2.1} does  not allow us to specify precisely which situation we finds ourselves in, even for $\eps<<1$. We claim that we can nevertheless  confirm that the waves are unstable, in the sense that the eigenvalue problem \eqref{124} has at least one positive eigenvalue.

To this end, consider the parameters $c, \si$ fixed, and $\be$ as a bifurcation parameter.  We start with the observation made above, that  for small $0<\eps<<1$ (that is $\be$ slightly bigger than $\f{1}{c}$), we have at least one real unstable eigenvalue. Allowing the parameter $\be>\f{1}{c}$ to increase, the Krein index may of course change, since our analysis showed that $n(D)=0$ or $n(D)=2$  only for $0<\eps<<1$. But regardless of that, there will always be at least one real instability. This is due to Proposition \ref{prop1} which asserts that the eigenvalue at zero is of algebraic multiplicity five for all values of the parameters, with three eigenvectors and two generalized eigenvectors described there. The only scenario for the real instability present at $\eps<<1$ to become stable is by passing through the zero generalized  eigenspace for some intermediate value of $\be$, which would have been detected by our analysis in Proposition \ref{prop1}. As we have shown, this does not happen. Thus, the real and positive eigenvalue is present for all $\be>\f{1}{c}$, and the snoidal waves $\vp$ are unstable. This completes the proof of Theorem \ref{theo:20} and it remains to establish Proposition \ref{prop:18}.
 \subsection{Proof of Proposition \ref{prop:18}}
 We first calculate $\det(D)$. By the specifics of it, see \eqref{103}, \eqref{104}, \eqref{105}, \eqref{107}, \eqref{108}, we have
 $$
 \det(D)=D_{33} \det(\tilde{D}) - \left(\f{1}{2c}\int\vp^2\right)^2 D_{22},
 $$
 where $\tilde{D}=\left(\begin{array}{cc} D_{11} & D_{12} \\ D_{12} & D_{22}
 \end{array}\right)$.
Taking into account the form of $\vp_0^2=const. \eps^{-1}$, we have
$$
\dpr{L_2^{-1} \vp'}{\vp'}=const. \eps^{-1} + O(\eps^{-2}),
$$
 while $\dpr{\vp^2}{\vp^2}=const. \eps^{-2}+O(\eps^{-1})$. So, we can conclude that
 $$
 D_{11}=\f{\be}{2c^2 (c\be-1)} \dpr{\vp^2}{\vp^2}+\dpr{L_2^{-1} \vp'}{\vp'}=\f{\be}{2c^2 (c\be-1)} \dpr{\vp^2}{\vp^2}+O(\eps^{-1}),
 $$
  whence
\begin{eqnarray*}
& & \det(\tilde{D}) = D_{11} D_{22}- D_{12}^2 =\\
&=&  \f{\be \vp_0^4}{\al c (c\be-1)} \left(\int_0^{2K(k\ka)} sn^4(x) dx\right) \f{F(\ka)}{\al (c\be-1)} -   \f{\vp_0^4}{\al^2 c^2(c\be-1)^2}  \left(\int_0^{2K(k\ka)} sn^2(x) dx\right)^2+O(\eps^{-2})\\
&=& \f{\vp_0^4}{\al^2 (c\be-1)^2} \left[\f{\be}{c} \left(\int_0^{2K(k\ka)} sn^4(x) dx\right) F(\ka) - \f{1}{c^2} \left(\int_0^{2K(k\ka)} sn^2(x) dx\right)^2\right]+O(\eps^{-2}).
\end{eqnarray*}
 Clearly, the first expression is of the form $const. \eps^{-4}$ and hence,  it is dominant in the regime $\be=\f{1}{c}+\eps, 0<\eps<<1$. Furthermore, the assignment $\be=\f{1}{c}+\eps$ allows us to further extract a leading order term as follows
 $$
  \det(\tilde{D}) = \f{\vp_0^4}{\al^2 c^2(c\be-1)^2 } \left[  \left(\int_0^{2K(k\ka)} sn^4(x) dx\right) F(\ka) -  \left(\int_0^{2K(k\ka)} sn^2(x) dx\right)^2\right]+O(\eps^{-3}).
 $$
 We have computed this last function of $\ka$  in \textsc{Mathematica}, and we have obtained the following explicit expression for it
 \begin{eqnarray*}
& & H(\ka) =  \left(\int_0^{2K(k\ka)} sn^4(x) dx\right) F(\ka) -  \left(\int_0^{2K(k\ka)} sn^2(x) dx\right)^2 = \\
 &=& \frac{(2 (\ka^2+2) K(k)-4 (\ka^2+1) E(k)) \left(2 K(k)+2 E(k) \left(\frac{\ka^2 E(k)}{(\ka^2-1) K(k)+(\ka^2+1) E(k)}-1\right)\right)-
 	12 ( E(k)- K(k))^2}{3 \ka^4}
 \end{eqnarray*}
 Plotting this leads to the conclusion $H[\ka]>0$ that, see Figure \ref{pic3}.
 \begin{figure}
 	\centering
 	\includegraphics[width=0.7\linewidth]{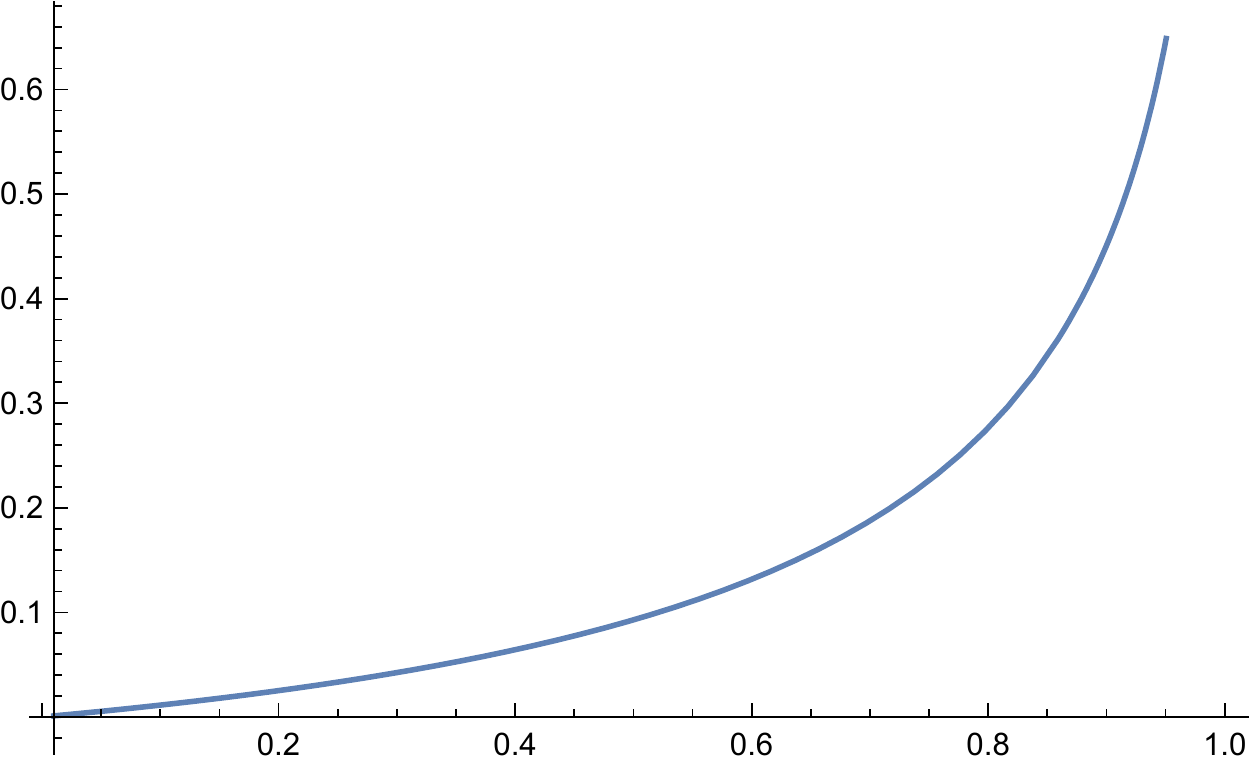}
 	\caption{Graph of 	$H(\ka)$}
 	\label{pic3}
 \end{figure}
  Thus, to a leading order
 $$
 \det(\tilde{D})=C(k,\si,c) \eps^{-4}+O(\eps^{-3}),
 $$
as $\eps: 0<\eps<<1$. In addition, observe that by \eqref{115}, we have that
$$
\left(\f{1}{2c}\int\vp^2\right)^2 D_{22}=O(\eps^{-3}).
$$
Accordingly, we have that
$$
\det(D)=D_{33} \det(\tilde{D}) - \left(\f{1}{2c}\int\vp^2\right)^2 D_{22}=C(k,\si,c) \eps^{-4}+O(\eps^{-3}),
$$
whence $\det(D)>0$, for all small enough $\eps>0$. This completes the proof of Proposition \ref{prop:18}. \\
{\bf Conflict of interest and data availability statement:} On behalf of all authors, the corresponding author states that there is no conflict of interest. We declare that our manuscript has no associated data.

\end{document}